\renewcommand{\pod}[1]{\allowbreak\mathchoice
 {\if@display \mkern 18mu\else \mkern 8mu\fi (#1)}
 {\if@display \mkern 18mu\else \mkern 8mu\fi (#1)}
 {\mkern4mu(#1)}
 {\mkern4mu(#1)}
}
\renewcommand{\eqref}[1]{(\ref{#1})} 
\theoremstyle{plain}
\newtheorem{theorem}{Theorem}[section]
\newtheorem{corollary}[theorem]{Corollary}
\newtheorem{lemma}[theorem]{Lemma}
\newtheorem{remark}{Remark}
\newtheorem{proposition}[theorem]{Proposition}
\newtheorem{algorithm}{Algorithm}
\newtheorem{definition}[theorem]{Definition}
\theoremstyle{remark}
\newcommand {\Q}{{\mathbb{Q}}}
\newcommand {\Z}{{\mathbb{Z}}}
\newcommand {\N}{{\mathbb{N}}}
\newcommand {\LL}{{\mathcal{L}}}
\newcommand{\rk} {{\mathop{\rm rk}}}
\numberwithin{equation}{section}
\begin{document}
\title{On the $4$--rank of class groups of Dirichlet biquadratic fields}
\date{\today}
\author{\'Etienne Fouvry}
\address{Universit\' e Paris--Saclay, CNRS, Laboratoire de math\'ematiques d'Orsay, 91405 Orsay, France}
\email{Etienne.Fouvry@universite-paris-saclay.fr}

\author{Peter Koymans}
\address{Max Planck Institute for Mathematics, Vivatsgasse 7, 53111 Bonn, Germany}
\email{koymans@mpim-bonn.mpg.de}

\author{Carlo Pagano}
\address{Max Planck Institute for Mathematics, Vivatsgasse 7, 53111 Bonn, Germany}
\email{carlein90@gmail.com}


\begin{abstract}
We show that for $100\%$ of the odd, squarefree integers $n > 0$ the $4$--rank of $\text{Cl}(\Q(i, \sqrt{n}))$ is equal to $\omega_3(n) - 1$, where $\omega_3$ is the number of prime divisors of $n$ that are $3$ modulo $4$.
\end{abstract}

\maketitle

\section{Introduction}
A classical result in number theory is the theorem of Gauss on ambiguous binary quadratic forms. This theorem gives, in modern terms, a description of $\text{Cl}(K)[2]$ if $K$ is a quadratic extension of $\Q$. In particular, Gauss proved that the dimension of the $\mathbb{F}_2$-vector space $\text{Cl}(K)[2]$ equals $\omega(\Delta_K) - 1$, where $\omega(\cdot)$ is the number of distinct  prime divisors, $\text{Cl}(K)$ is the narrow class group and $\Delta_K$ is the discriminant of $K$. Since then, the class group has taken a prominent role in number theory but still remains a rather mysterious object.

From a heuristic standpoint, the class group is better understood in families of number fields due to the conjectures of Cohen--Lenstra \cite{Coh-Len} and later Cohen--Martinet \cite{Coh-Mart}. The Cohen--Martinet heuristics have several known flaws, and they have been corrected and extended by several authors \cite{BL, LMZ, WW}. To state the Cohen--Lenstra conjectures, let $p$ be an odd prime and let $A$ be a finite, abelian $p$-group. Then the $p$-part of the class group of an imaginary quadratic field is conjectured to be isomorphic to $A$ with probability proportional to $1/\# \text{Aut}(A)$.

For $p = 2$ this obviously breaks down due to the rather predictable nature of $\text{Cl}(K)[2]$. A natural workaround was found by Gerth, who predicted that a finite abelian $2$-group $A$ is isomorphic to $2\text{Cl}(K)[2^\infty]$ with probability proportional to $1/\# \text{Aut}(A)$. This was recently proven by Smith \cite{Smith}. The odd part remains a mystery with the most significant result due to Davenport--Heilbronn \cite{Dav-Hei} on the first moment, which was later improved by Bhargava--Shankar--Tsimerman \cite{Bha-Sha-Tsi}. There are also results on class groups of cubic fields \cite{Bha-Var} and $S_n$-fields \cite{Ho-Sha-Var}.

In this paper we are precisely interested in the case where the degree of the number field is not coprime to the part of the class group we are studying. This case is excluded in the heuristics of Cohen--Lenstra and
 Cohen--Martinet, and we hope that this work will aid in the development of heuristics in this case. Other known 
results regarding statistical properties of class groups in families where the degree is not coprime to the part of the class group are due to Fouvry--Kl\"uners \cite{Fou-Klu-Inv, Fou-Klu-Ann, Fou-Klu-PLMS, Fou-Klu-IMRN}, Klys \cite{Klys}, Koymans--Pagano \cite{Koy-Pag2} and Pagano--Sofos \cite{Pag-Sof}, who developed heuristics for ray class groups based 
on earlier work of Varma \cite{Var} and proved them for the $4$-rank of imaginary quadratic fields.

To state the main result we use the following notations. For $\ell \in \{1, 3\}$ and $n \geq 1$ an integer, let $\omega_\ell (n)$ be the number of distinct prime factors of $n$ which are congruent to $\ell$ modulo $4$. We define $K_n := \Q(i, \sqrt{n})$ and we let $\text{Cl}(K_n)$ be the class group of $K_n$. These fields were first studied by Dirichlet, in the context of quadratic forms \cite{Diri}, and further studied for special values of $n$ by Azizi et al., see for instance \cite{Azizi1, Azizi2, Azizi3,  Azizi4, Azizi5, Azizi6, Azizi7}, and also in \cite{Fou-Koy}. Furthermore, the $2^k$--rank of a finite abelian group $A$ is by definition $\rk_{2^k} A := \dim_{\mathbb{F}_2} 2^{k - 1} A/ 2^k A$.

\begin{theorem}
\label{main}
We have
\[
\#\{0 < n < x : n \textup{ odd and squarefree}, \ \rk_4 \textup{Cl}(K_n) \neq \omega_3(n) - 1\} = O\left(\frac{x}{(\log x)^{1/8}}\right).
\]
\end{theorem}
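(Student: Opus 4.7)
The plan is to view $K_n$ as a quadratic extension of $F=\Q(i)$, exploiting that $\Z[i]$ naturally separates the primes $p \equiv 1 \pmod 4$ (which split as $\pi\bar\pi$) from the primes $q \equiv 3 \pmod 4$ (which stay inert); this is what introduces $\omega_3(n)$ into the answer. Writing $n = p_1\cdots p_a \cdot q_1 \cdots q_b$ with $a=\omega_1(n)$ and $b=\omega_3(n)$, the primes of $\Z[i]$ ramifying in $K_n/F$ are precisely the $\pi_i, \bar\pi_i, q_j$, with the prime above $2$ contributing a bounded amount of extra data depending only on the class of $n$ modulo a small power of $2$.

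First I would compute $\rk_2 \textup{Cl}(K_n)$ via genus theory for $K_n/F$ or, equivalently, via Kuroda's class number formula applied to the biquadratic decomposition $\Q \subset \Q(i),\Q(\sqrt{n}),\Q(\sqrt{-n}) \subset K_n$ combined with Gauss' formulas for the $2$-rank of the two imaginary/real quadratic subfields. This should give $\rk_2 \textup{Cl}(K_n) = 2\omega_1(n) + \omega_3(n) + c(n)$ with a small bounded correction $c(n)$, together with an explicit basis of $\textup{Cl}(K_n)[2]$ in terms of classes of the ramified primes of $K_n$.

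Next I would recast the theorem in terms of a R\'edei-type matrix $R(n)$ over $\F_2$ whose entries are biquadratic residue symbols $\sy{\alpha}{\beta}$ between ramified primes of $\Z[i]$; the identity $\rk_4 \textup{Cl}(K_n) = \rk_2 \textup{Cl}(K_n) - \mathrm{rank}_{\F_2} R(n)$ then converts the theorem into the statement that $R(n)$ attains a specific maximal rank for $100\%$ of $n$. The target value $\omega_3(n)-1$ should drop out of linear algebra after noting that the columns of $R(n)$ indexed by the inert primes $q_j$ are constrained by reciprocity to contribute one fewer to the rank than the columns indexed by the split primes $\pi_i$.

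The main obstacle is the analytic estimate. Expanding the event of rank deficiency of $R(n)$ over the nontrivial $\F_2$-combinations of rows/columns and applying orthogonality, the problem reduces to bounding averages of products $\prod_{i,j} \sy{\alpha_i}{\alpha_j}$ of biquadratic symbols over the primes dividing a squarefree $n \leq x$. The trivial bound from diagonal terms is insufficient, and the crux is a large sieve inequality for quadratic characters over $\Z[i]$, deployed in the style of Fouvry--Kl\"uners, together with a splitting of the prime variables into small and large ranges. The exponent $1/8$ in the error term should appear as the optimum when balancing pointwise Siegel--Walfisz estimates on the short ranges against large-sieve cancellation on the long ones.
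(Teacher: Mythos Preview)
Your outline is essentially the paper's strategy: view $K_n$ as a quadratic extension of $\Q(i)$, describe $\text{Cl}(K_n)^\vee[2]$ by genus theory over $\Z[i]$, encode the $4$--rank by quadratic residue symbols among the Gaussian primes dividing $n$, and estimate the resulting character sums by combining Siegel--Walfisz with bilinear (large-sieve type) bounds over $\Z$ and $\Z[i]$ in the Fouvry--Kl\"uners style. The paper packages your R\'edei matrix as the counting function $f(n)$ of \eqref{deff1}, proves $f(n)\geq 2^{\omega_3(n)-1}$, and computes the first moment $\sum_{n\le x}\mu^2(2n)\,f(n)/2^{\omega_3(n)-1}$; since each summand is $\geq 1$, the asymptotic forces equality for almost all $n$. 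Your proposed expansion of ``rank deficiency'' amounts to the same sum.

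The one genuine gap in your plan is the assertion that $\rk_4\text{Cl}(K_n)=\rk_2\text{Cl}(K_n)-\text{rank}_{\F_2}R(n)$ holds for all $n$. It does not. One inequality (R\'edei conditions are necessary for $\chi_\alpha\in 2\text{Cl}(K_n)^\vee[4]$) is automatic, but the converse requires producing an \emph{unramified} cyclic degree-$4$ extension, and the obstruction at the prime $1+i$ is not recorded by $R(n)$. The paper handles this in Proposition~\ref{cleaning the ramification for Q(i)} under the hypothesis that $n$ is \emph{generic}, i.e.\ has a prime divisor $p\equiv 5\bmod 8$: writing $p=\pi\bar\pi$, the set $\{\pi,\bar\pi,i,1+i\}$ then spans $\Q_2(i)^*/(\Q_2(i)^*)^2$, so one can twist away the $2$-adic ramification without disturbing any odd place. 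This is where the algebra has real content; your line ``the prime above $2$ contributing a bounded amount of extra data'' is exactly the point that needs work. Non-generic $n$ contribute $O(x(\log x)^{-1/4})$ and are discarded separately.

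A minor terminological point: the symbols you need are the \emph{quadratic} residue symbols $[\,\cdot/\cdot\,]$ over $\Z[i]$ (with values $\pm 1$, the squares of the quartic characters), not genuine quartic symbols.
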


In simple terms, we have that the $4$--rank of $\text{Cl}(K_n)$ is $\omega_3(n) - 1$ for $100\%$ of the odd, squarefree integers $n$. Note that this behavior is wildly different from the case of quadratic extensions of $\Q$ (see \cite[Prop. 12]{Fou-Koy} for instance), and we believe it to be a non-trivial task to develop appropriate heuristics in this setting. A weaker result can be found in \cite[Theorem 1]{Fou-Koy}, where it is proven that at least $28\%$ of the odd, squarefree integers $n$ satisfy $\rk_4 \textup{Cl}(K_n) = \omega_3(n) - 1$. 

To prove Theorem \ref{main}, we start by giving a description of $\text{Cl}(K_n)^\vee[2]$. Such a description can be obtained from the work of Fr\"ohlich \cite{Frohlich}, who studied $\text{Cl}(K)^\vee[2]$ for any biquadratic extension $K$ of $\Q$. This was later extended to $\text{Cl}(K)^\vee[2]$ with $K$ an arbitrary multiquadratic extension of $\Q$ in \cite{Koy-Pag}.

Once we described $\text{Cl}(K_n)^\vee[2]$, we are in the position to obtain a criterion for an element of $\text{Cl}(K_n)^\vee[2]$ to be in $2\text{Cl}(K_n)^\vee[4]$. To do so, we introduce the notion of genericity.

\begin{definition}
We say that an odd, squarefree integer $n > 0$ is generic if it has a prime divisor $5$ modulo $8$.
\end{definition}

This notion of generic integer already appears in \cite{Fou-Koy}. More precisely, for $n$ odd and squarefree, we have the equality
(see \cite[Prop. 8]{Fou-Koy})
$$
\rk_2 (\text{Cl} (K_n)) =
\begin{cases}
2 \omega_1 (n) +\omega_3 (n) -1 \text{ if } n \text{ is generic},\\
2\omega_1 (n) +\omega_3 (n)-2 \text{ if } n \text{ is not generic.}
\end{cases}
$$

As we shall see below, our algebraic criterion is only valid if $n$ is generic. It is here that we make essential use that $\Q(i)$ has class number $1$, and it is plausible that Theorem \ref{main} can be extended to any family of the shape $\Q(\sqrt{d}, \sqrt{n})$ as long as $d$ is negative and $\Q(\sqrt{d})$ has class number $1$. It would be most interesting to extend the results further in case that $\Q(\sqrt{d})$ does not have class number $1$.

We shall reserve the letter $\pi$ for irreducible elements in $\Z [i]$. For $n\geq 3$ an odd squarefree integer, we introduce the following arithmetical function $f(n)$ defined by 
\begin{multline}
\label{deff1}
f(n) := \frac 14 \ \cdot \sharp \Bigl\{\beta \in \mathbb Z [i] :\ \beta \equiv \pm 1 \bmod 4\Z [i], \ \beta \vert n\text{ such that } \\ \text{ for all } \pi \vert \beta \text{ the Gaussian integer } n/\beta \text { is a square modulo } \pi \\
\text{ and for all } \pi \vert (n/\beta) \text{ the Gaussian integer } \beta \text { is a square modulo } \pi
\Bigr\}.
\end{multline}
This function resembles the quantity appearing in \cite[Lemma 16]{Fou-Klu-Inv}. The definition of $f(n)$ directly implies that it is 
a power of $2$  satisfying the inequalities
$$
1\leq f(n) \leq 2^{2\omega_1 (n)+\omega_3 (n)-1}.
$$
We can now state our key algebraic result.

\begin{theorem}
\label{tAlgebra}
Let $n$ be generic. Then we have
\[
2^{\rk_4 \textup{Cl}(K_n)} = f(n)
\]
and furthermore $f(n) \geq 2^{\omega_3(n) - 1}$.
\end{theorem}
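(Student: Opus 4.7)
The plan is to prove Theorem \ref{tAlgebra} in two parts: first the identity $2^{\rk_4 \Cl(K_n)} = f(n)$ via class field theory, then the lower bound via a linear-algebra computation.

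For the identity, I start from the Fr\"ohlich-type parametrization of $\Cl(K_n)^\vee[2]$ from \cite{Frohlich, Koy-Pag}. Because $\Z[i]$ is a principal ideal domain with unit group $\{\pm 1, \pm i\}$, each 2-torsion character admits a canonical representative $\chi_\beta$, where $\beta$ is a Gaussian divisor of $n$ normalized by $\beta \equiv \pm 1 \bmod 4\Z[i]$; the character $\chi_\beta$ cuts out the unramified quadratic extension $K_n(\sqrt\beta)/K_n$. By class field theory, $\chi_\beta \in 2\Cl(K_n)^\vee[4]$ precisely when $K_n(\sqrt\beta)/K_n$ embeds into an unramified cyclic $\Z/4\Z$-extension of $K_n$. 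Translating this embedding obstruction into local Hilbert symbols at each ramified prime and descending via $h(\Q(i)) = 1$ to Gaussian quadratic residues, one obtains exactly the symmetric square conditions defining $f(n)$: $n/\beta$ is a square modulo every $\pi \mid \beta$ and $\beta$ is a square modulo every $\pi \mid n/\beta$. The factor $1/4$ compensates for the overcount from the identifications $\chi_\beta = \chi_{-\beta}$ (from the $\pm 1$ in the normalization) and $\chi_\beta = \chi_{n/\beta}$ (from Galois conjugacy).

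For the lower bound $f(n) \geq 2^{\omega_3(n) - 1}$, restrict attention to purely rational divisors $\beta = \pm \prod_{i \in T} p_i \prod_{k \in U} q_k$, where $T$ ranges over subsets of the $\omega_3(n)$ rational primes $\equiv 3 \bmod 4$ dividing $n$ and $U$ over subsets of the $\omega_1(n)$ primes $\equiv 1 \bmod 4$ dividing $n$. The normalization $\beta \equiv \pm 1 \bmod 4\Z[i]$ is automatic. Because any rational integer coprime to an inert prime $p$ is a square in $\F_{p^2}^\times$ (as $\F_p^\times \subseteq (\F_{p^2}^\times)^2$), all square conditions at inert primes become tautological. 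The surviving conditions at split primes above each $q_k$ reduce to an $\F_2$-linear system of exactly $\omega_1(n)$ equations in the $\omega_3(n) + \omega_1(n) + 1$ binary parameters $(T, U, \text{sign})$. Since $\left(\frac{-1}{q_k}\right) = 1$ for every $q_k \equiv 1 \bmod 4$, the sign contributes freely, and rank-nullity produces a solution space of dimension at least $\omega_3(n) + 1$, yielding $f(n) \geq 2^{\omega_3(n) + 1}/4 = 2^{\omega_3(n) - 1}$.

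The principal technical challenge is the local analysis at the prime $2$ in the equality step: the normalization $\beta \equiv \pm 1 \bmod 4\Z[i]$ together with the genericity hypothesis (a prime $p_\ast \equiv 5 \bmod 8$ dividing $n$) must conspire to annihilate the $2$-adic Hilbert symbol obstruction. This is the unique place where genericity enters essentially; the lower bound itself is pure $\F_2$-linear algebra and requires no genericity.
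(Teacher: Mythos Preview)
Your plan is correct and, for the identity $2^{\rk_4\Cl(K_n)}=f(n)$, follows essentially the paper's route: parametrize $\Cl(K_n)^\vee[2]$ by normalized Gaussian divisors $\beta$ of $n$ (Proposition~\ref{genus theory}), translate the condition $\chi_\beta\in 2\Cl(K_n)^\vee[4]$ into the vanishing of the local Hilbert symbols $(\beta,n/\beta)_v$ at all $v\mid n$ (Proposition~\ref{4rk space for odd n}), and then rewrite these as the Gaussian quadratic--residue conditions in the definition of $f(n)$. Your identification of the sole r\^ole of genericity---killing the $2$--adic obstruction so that the Hilbert--symbol criterion is not only necessary but sufficient---matches exactly the content of Proposition~\ref{cleaning the ramification for Q(i)}.

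For the inequality $f(n)\ge 2^{\omega_3(n)-1}$ you take a genuinely different path from the paper. The paper (Corollary~\ref{c4rkbig}) works on the class--group side: it applies rank--nullity to the map $T\colon \Cl(K_n)^\vee[2]\to\{\pm1\}^{\{v\mid n\}}$, $\chi_\alpha\mapsto\bigl((\alpha,n/\alpha)_v\bigr)_v$, bounding the image by $2\omega_1(n)-1$ and invoking Proposition~\ref{4rk space for odd n} to identify the kernel with $2\Cl(K_n)^\vee[4]$. You instead work directly with $f(n)$, restricting to \emph{rational} divisors $\beta\mid n$: at inert primes the residue field is $\mathbb F_{q^2}$ and every element of $\mathbb F_q^\times$ is a square there, so those conditions are vacuous; the remaining conditions at the $\omega_1(n)$ split primes form a homogeneous $\mathbb F_2$--linear system in $\omega_1(n)+\omega_3(n)$ variables (the linearity is genuine once one notes that the two cases ``$p_h\mid\beta$'' and ``$p_h\mid n/\beta$'' differ by the fixed constant $\bigl(\tfrac{n/p_h}{p_h}\bigr)$, which is absorbed as the $\epsilon_h$--coefficient), giving $\ge 2^{\omega_3(n)}$ solutions; the free $\pm$ sign and the division by $4$ finish the count. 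Your argument is more elementary, and---as the paper itself observes in the Remark following the proof of Theorem~\ref{tAlgebra}---has the bonus of establishing $f(n)\ge 2^{\omega_3(n)-1}$ without any genericity assumption.
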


With the aid of Magma we computed a list of generic integers $3 \leq n \leq 1000$ for which $\rk_4 \textup{Cl}(K_n) \geq \omega_3(n)$. By Theorem \ref{tAlgebra} we certainly have for such $n$ that $\rk_4 \textup{Cl}(K_n) \geq \omega_3(n) - 1$. This gives the following table (we have excluded those with $\omega_3(n) = 0$, since they trivially satisfy $\rk_4 \textup{Cl}(K_n) \geq \omega_3(n)$) \\

\begin{center}
\begin{tabular}{| l | l | l |}
\hline
 $\omega_3(n)$ & Generic $3 \leq n \leq 1000$ with $\rk_4 \textup{Cl}(K_n) \geq \omega_3(n)$\\
 \hline
1 & \{39,
55,
95,
111,
155,
183,
203,
259,
295,
299,
327,
355,
371,
395,\\
&
407,
471,
543,
559,
583,
655,
663,
667,
687,
695,
755,
763,
831,
895,\\
&
915,
955,
995\} \\
\hline
2 & \{777, 897\}. \\
\hline
\end{tabular}
\end{center}

\vspace{0.4cm}
The above table shows $33$ integers, while the total number of generic integers satisfying $3 \leq n \leq 1000$ and $\omega_3(n) > 0$ is $96$ of which $78$ are with $\omega_3(n) = 1$ and $18$ are with $\omega_3(n) = 2$. Furthermore, the smallest generic $n$ with $\omega_3(n) > 0$ and $\rk_4 \textup{Cl}(K_n) \geq \omega_3(n) + 1$ is $n = 1443$, and the smallest $n$ with instead $\rk_4 \textup{Cl}(K_n) \geq \omega_3(n) + 2$ is $n = 4895$.











It is not hard to show from our methods that we always have the inequalities
\begin{align}
\label{eInequalities}
2^{\omega_3(n) - 2} \leq \frac{f(n)}{2} \leq 2^{\rk_4 \textup{Cl}(K_n)} \leq f(n)
\end{align}
for odd, squarefree $n \geq 3$. Since our focus is Theorem \ref{main}, we shall not include the proofs of these inequalities. Our main analytic result shows that, for a special type of averaging, $f(n)$ is close to $2^{\omega_3 (n)-1}$. We have

\begin{theorem} 
\label{central} 
Uniformly for $x \geq 2$ we have 
\begin{equation}
\label{importantsum}
\sum_{n\leq x} \mu^2 (2n) \Bigl( \frac {f(n)} { 2^{\omega_3 (n) -1}}\Bigr) = \sum_{n \leq x} \mu^2 (2n) + O (x \log^{-1/8}x).
\end{equation}
\end{theorem}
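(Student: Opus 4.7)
The plan is to expand the residue indicators in the definition \eqref{deff1} of $f(n)$ via the identity $\mathbf{1}[(\alpha \mid \pi)_2 = 1] = \tfrac{1}{2}(1 + (\alpha \mid \pi)_2)$, converting $f(n)$ into a multiple sum of quadratic Jacobi symbols in $\Z[i]$. Factorising $n$ in $\Z[i]$ into $2\omega_1(n) + \omega_3(n)$ Gaussian primes and expanding both products, I would obtain
\begin{equation*}
f(n) = \frac{1}{4 \cdot 2^{2\omega_1(n)+\omega_3(n)}} \sum_{\substack{\beta \mid n \\ \beta \equiv \pm 1 \bmod 4\Z[i]}} \sum_{\substack{d \mid \beta \\ e \mid n/\beta}} (n/\beta \mid d)_2 \, (\beta \mid e)_2,
\end{equation*}
then apply quadratic reciprocity in $\Z[i]$ on primary elements to symmetrise the two Jacobi symbols and swap the orders of summation. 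This should decompose $\sum_{n \leq x} \mu^2(2n) f(n)/2^{\omega_3(n)-1}$ into a principal contribution, indexed by the trivial pair $(d, e) = (1, 1)$, plus an oscillatory remainder indexed by non-trivial Gaussian divisor pairs.

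Next I would check that the principal contribution equals $\sum_{n \leq x} \mu^2(2n)$ up to a negligible term. This requires a careful unit-count of the $\beta$'s, exploiting that $-1$ is a Gaussian square (hence inactive in every quadratic condition) and that the normalising factor $2^{\omega_3(n) - 1}$ is precisely calibrated to produce a ``$1$'' on average; the distinction between split primes $p \equiv 1 \bmod 4$ and inert primes $q \equiv 3 \bmod 4$, together with the sign condition $\beta \equiv \pm 1 \bmod 4\Z[i]$, must be tracked with some care. For the off-diagonal contribution, I would bound
\begin{equation*}
\mathcal{E}(x) = \sum_{(d, e) \neq (1, 1)} c(d, e) \sum_{\substack{n \leq x \\ de \mid n}} \mu^2(2n) \, \chi_{d, e}(n),
\end{equation*}
where each $\chi_{d, e}$ is a non-trivial quadratic Hecke character over $\Z[i]$ of conductor dividing $de$. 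Splitting the sum over $(d, e)$ according to whether $|de|$ is less than or greater than a parameter $y$: for $|de| \leq y$, a Siegel--Walfisz-type estimate (or Polya--Vinogradov) for Gaussian Hecke characters gives a power-of-log saving per character sum; for $|de| > y$, the trivial bound $\#\{n \leq x : de \mid n\} = O(x/|de|)$ suffices. Optimising $y$ should yield $\mathcal{E}(x) = O(x (\log x)^{-1/8})$.

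The hard part will be controlling the combinatorial size of the off-diagonal index set: each $n$ produces roughly $3^{2\omega_1(n)+\omega_3(n)}$ pairs $(d, e)$, dwarfing any pointwise character-sum bound. The standard remedy is dualisation combined with an averaging argument using a large-sieve inequality for quadratic Hecke characters over $\Z[i]$, mirroring the approach of Fouvry--Kl\"uners \cite{Fou-Klu-Inv, Fou-Klu-Ann} for the $4$-rank of imaginary quadratic class groups. The specific exponent $1/8$ likely reflects two Cauchy--Schwarz steps combined with the optimisation of the splitting parameter $y$. A further delicate point will be the consistency of sign and unit conventions --- the condition $\beta \equiv \pm 1 \bmod 4\Z[i]$, the choice of primary Gaussian primes, and the sign twists in $\Z[i]$-quadratic reciprocity --- so that the main term really produces $\sum_n \mu^2(2n)$ with no hidden correction factor.
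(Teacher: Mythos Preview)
Your expansion of $f(n)$ into a four-variable character sum over $(\beta_0,\beta_1,\beta_2,\beta_3)=(\beta/d,\,d,\,(n/\beta)/e,\,e)$ matches the paper's formula \eqref{f(n)=}, and the overall strategy (main term plus oscillatory error handled by Siegel--Walfisz and bilinear bounds, as in \cite{Fou-Klu-Inv,Fou-Klu-Ann}) is the right one. The genuine gap is your identification of the main term as the single pair $(d,e)=(1,1)$. After reciprocity the characters become $\bigl[\frac{\beta_0}{\beta_3}\bigr]\bigl[\frac{\beta_2}{\beta_1}\bigr]$, and by \eqref{=1always} these equal $1$ whenever \emph{all four} $\beta_i$ are rational integers --- not only when $\beta_1=\beta_3=1$. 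So your ``oscillatory remainder'' contains a large non-oscillating block (all $\beta_i\in\Z$, $(\beta_1,\beta_3)\neq(1,1)$) of size $\asymp x$, which no character-sum or large-sieve bound can reduce to $O(x\log^{-1/8}x)$. One can also check directly that the $(d,e)=(1,1)$ contribution does not produce $\sum_{n}\mu^2(2n)$: for $n=p\equiv 5\bmod 8$ the count of admissible $\beta$ is $4$ (only $\pm 1,\pm p$, since $\pi,\bar\pi\not\equiv\pm1\bmod 4$), giving $\tfrac12$ rather than $1$.

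The paper's remedy is an additional decomposition (Lemma~\ref{decomposition}): each $\beta_k$ is factored as a rational integer $b_k$ times a product of \emph{primitive} primary Gaussian integers $z_{k\ell}$ (with $z_{\ell k}=\overline{z_{k\ell}}$), yielding ten independent variables. The main term $S^{\rm MT}(x)$ is then exactly the locus where every $z_{k\ell}=1$, and the count there is $\nu(n)=2\cdot 4^{\omega(n)}$ on the nose, giving $\sum_n\mu^2(2n)$ with no leftover. The error term is the locus where some $z_{k\ell}\neq 1$, and now every such term genuinely carries a nontrivial character. Bounding it is not done by a single split on $|de|$: instead the paper dissects all ten variables into dyadic-type boxes, shows (Lemma~\ref{>7negligible}) that boxes with at most three ``large'' variables are negligible by size alone, and then uses a combinatorial lemma (Lemma~\ref{cancell}) guaranteeing that among any four large variables one can always isolate a pair $(x,y)$ for which $F(\boldsymbol b,\boldsymbol z)=\xi(\widehat x)\zeta(\widehat y)\bigl[\frac{x}{y}\bigr]$ factors, so that either Lemma~\ref{convenientSiegel} (one large $b_k$ against a small nontrivial Gaussian modulus) or the bilinear bounds of Lemmas~\ref{billegendreextended}--\ref{boundsforbili2} apply. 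Your sketch has the right ingredients in spirit, but without the integer/primitive-Gaussian splitting you cannot separate the true main term from the error, and without the multi-variable dissection the bilinear step has no handle.
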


Standard methods from analytic number theory show the equality
\begin{equation}
\label{summu2}
\sum_{n \leq x} \mu^2 (2n) = \frac 4{\pi^2}\cdot x + O (\sqrt x),
\end{equation}
uniformly for $x \geq 2$. This shows that \eqref{importantsum} is an asymptotic formula. 

With slightly more effort, particularly in the proof of Lemma \ref{>7negligible}, it is possible to improve the error term in \eqref{importantsum} to $ O (x \log^{-\theta}x)$ for any $\theta <1/4$. The equality \eqref{importantsum} could be generalized to the set of integers $n\leq x$ such that all the prime divisors of $n$ belong to an imposed congruence class.

The layout of the paper is as follows. In \S \ref{s2Torsion} we study the 2--torsion of $\text{Cl}(K_n)$. Then, in \S \ref{s4Torsion}, we derive our pivotal algebraic results, culminating in the proof of Theorem \ref{tAlgebra}. The next sections are devoted to the analysis of the sum appearing in \eqref{importantsum}. In our final section \S \ref{sMain} we show how Theorem \ref{tAlgebra} and Theorem \ref{central} imply Theorem \ref{main}.

\section*{Acknowledgements}
We thank Florent Jouve for some numerical computations. We are also very grateful to Mark Shusterman for useful discussions regarding the function field version of this problem. Peter Koymans and Carlo Pagano wish to thank the Max Planck Institute for Mathematics in Bonn for its financial support, great work conditions and an inspiring atmosphere.

\section{On the $2$--torsion of $\text{C{\rm l}}(K_n)$}
\label{s2Torsion}
For an abelian group $A$, we write $A[m]$ for the part of $A$ that is killed by $m$ and $A^\vee := \text{Hom}(A, \mathbb{C})$ for its dual. In what follows, we let $n \in \mathbb{Z}_{\geq 3}$ be an odd, squarefree integer. Recall that $n$ is \emph{generic} in case there exists a prime number congruent to $5$ modulo $8$ that divides $n$. Write $n:=p_1 \cdot \ldots \cdot p_r \cdot q_1 \cdot \ldots \cdot q_s$, where for each $(h,k) \in [r] \times [s]$ we have that $p_h$ and $q_k$ are respectively $1$ and $3$ modulo $4$. For each $h \in [r]$, decompose $p_h$ in $\mathbb{Z}[i]$ as
$$
p_h := \pi_h \cdot \overline{\pi_h},
$$
where $\pi_h:=a_h+ib_h$ with $2 \mid b_h$ and $a_h \equiv 1 \bmod 4$. The following gives a complete description of the quadratic extensions of $K_n$ that are unramified at all finite places and thus, since $K_n$ is totally complex, a description of the space $\text{Cl}(K_n)^{\vee}[2]$ by class field theory.

\begin{proposition} 
\label{genus theory}
Let $L/K_n$ be a quadratic extension. Then it is unramified if and only if there exist functions
$$
\epsilon_1, \epsilon_2: [r] \to \{0,1\}, \alpha: [s] \to \{0,1\}
$$
such that 
\begin{align}
\label{eSumMod2}
\sum_{h \in [r]: p_h \equiv 5 \bmod 8} \Bigl(\epsilon_1(h)+\epsilon_2(h) \Bigr)\equiv 0 \bmod 2
\end{align}
and
$$
L=\mathbb{Q}\left(i, \sqrt{n}, \sqrt{\prod_{h \in [r]} \pi_h^{\epsilon_1(h)} \overline{\pi_h}^{\epsilon_2(h)} \cdot \prod_{k \in [s]} q_k^{\alpha (k)}}\right).
$$
\end{proposition}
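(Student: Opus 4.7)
The plan is to combine Kummer theory with a careful local analysis at the prime above $2$. By Kummer theory, every quadratic extension of $K_n$ has the form $L = K_n(\sqrt{\gamma})$ for some $\gamma \in K_n^\times$, with $L$ determined by the class of $\gamma$ in $K_n^\times/(K_n^\times)^2$; the extension is unramified at a finite prime $\mathfrak{p}$ of odd residue characteristic if and only if $v_\mathfrak{p}(\gamma)$ is even. The primes of $K_n$ ramified over $\Q(i)$ are exactly those lying above the rational prime divisors of $n$: writing $\mathfrak{P}_h$ and $\overline{\mathfrak{P}_h}$ for the primes above $\pi_h$ and $\overline{\pi_h}$, and $\mathfrak{Q}_k$ for the prime above $q_k$, one has $\mathfrak{P}_h^2 = (\pi_h)$, $\overline{\mathfrak{P}_h}^2 = (\overline{\pi_h})$, $\mathfrak{Q}_k^2 = (q_k)$ in $\OO_{K_n}$.

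First I would reduce to Kummer generators lying in $\Z[i]$. The condition $v_\mathfrak{p}(\gamma) \in 2\Z$ for every odd $\mathfrak{p}$ of $K_n$ forces the support of $(\gamma)\OO_{K_n}$, modulo squares of ideals, to be contained in the set of primes above the rational divisors of $2n$. Exploiting the ramification structure together with the class number $1$ of $\Q(i)$, one can adjust $\gamma$ by an element of $(K_n^\times)^2$ so that $\gamma \in \Z[i]$ has prime support contained in $\{\pi_h, \overline{\pi_h}, q_k\} \cup \{1+i\}$. Together with coset representatives of $\Z[i]^\times/(\Z[i]^\times)^2 = \{1, i\}$, this produces a presentation
\[
\gamma \equiv u \cdot (1+i)^\delta \cdot \prod_{h \in [r]} \pi_h^{\epsilon_1(h)} \overline{\pi_h}^{\epsilon_2(h)} \cdot \prod_{k \in [s]} q_k^{\alpha(k)} \pmod{(K_n^\times)^2}
\]
with $u \in \{1, i\}$ and $\delta, \epsilon_1(h), \epsilon_2(h), \alpha(k) \in \{0,1\}$. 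Conversely, any $\gamma$ of this shape automatically satisfies $v_\mathfrak{p}(\gamma) \in 2\Z$ at every odd $\mathfrak{p}$; the only remaining question is unramification at the prime(s) above $2$.

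The heart of the proof is the local analysis at $(1+i)$. Since $n$ is odd, the relative discriminant of $K_n/\Q(i)$ equals $(n)$, which is coprime to $(1+i)$, so $K_n/\Q(i)$ is unramified at $(1+i)$. Let $\mathfrak{l}$ be a prime of $K_n$ above $(1+i)$, so that $(K_n)_\mathfrak{l}$ is either $\Q_2(i)$ or its unramified quadratic extension. The condition that $K_n(\sqrt\gamma)/K_n$ is unramified at $\mathfrak{l}$ cuts out an index-two subgroup $U_\mathfrak{l}$ of $(K_n)_\mathfrak{l}^\times/((K_n)_\mathfrak{l}^\times)^2$ described by a congruence modulo a power of the uniformizer $1+i$. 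I would then compute the image of each generator in $(K_n)_\mathfrak{l}^\times/U_\mathfrak{l}$: since $1+i$ is a uniformizer, its image is nontrivial, forcing $\delta = 0$; the prime $q_k$, with $q_k \equiv -1 \pmod 4$ and $-1 = i^2$ a square, contributes trivially; and the image of $\pi_h$ depends on $p_h \bmod 8$. Using $a_h \equiv 1 \pmod 4$, $2 \mid b_h$, together with the identity $p_h = a_h^2 + b_h^2$ and $a_h^2 \equiv 1 \pmod 8$, one finds $b_h \equiv 0 \pmod 4$ iff $p_h \equiv 1 \pmod 8$, and $b_h \equiv 2 \pmod 4$ iff $p_h \equiv 5 \pmod 8$. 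A direct computation modulo a suitable power of $1+i$ then shows that $\pi_h$ lies in $U_\mathfrak{l}$ exactly when $p_h \equiv 1 \pmod 8$, and symmetrically for $\overline{\pi_h}$. The remaining linear obstruction is thus
\[
\sum_{h \in [r]:\, p_h \equiv 5 \bmod 8} \bigl(\epsilon_1(h) + \epsilon_2(h)\bigr) \equiv 0 \pmod{2},
\]
which is precisely \eqref{eSumMod2}. A parallel check absorbs the unit $u$, giving $u = 1$.

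The main obstacle is the local computation at $(1+i)$: correctly identifying the unramification subgroup $U_\mathfrak{l}$ and tracking the images of $1+i$, $i$, $\pi_h$, $\overline{\pi_h}$, and $q_k$ modulo $U_\mathfrak{l}$. The crucial subtlety is that whether $\pi_h$ lies in $U_\mathfrak{l}$ is governed by $p_h \bmod 8$ via the parity of $b_h/2$; miscounting here would either over- or under-determine the parametrization. As a sanity check, one may compare the count of parametrized extensions modulo squares with the dimension formula $\rk_2 \Cl(K_n) = 2\omega_1(n) + \omega_3(n) - 1$ in the generic case recalled above from \cite{Fou-Koy}.
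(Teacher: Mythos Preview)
Your overall strategy—Kummer theory followed by a local analysis at $(1+i)$—is the right one, and your $2$-adic computation (the criterion $4\mid b_h$ iff $p_h\equiv 1\bmod 8$, yielding the parity condition \eqref{eSumMod2}) is correct and matches the paper's. The gap is the descent step. You assert that, ``exploiting the ramification structure together with the class number $1$ of $\Q(i)$'', one may adjust $\gamma\in K_n^\times$ by a square so that $\gamma\in\Z[i]$ with support in $\{\pi_h,\overline{\pi_h},q_k,1+i\}$ and unit part in $\{1,i\}$. This is the crux, and the justification is too thin. Working inside $K_n$, even valuation at odd primes gives $(\gamma)=I^2 J$ with $J$ supported above $2$; to pass from here to $\gamma\in\Z[i]$ modulo $(K_n^\times)^2$ you must control both the class of $I$ in $\Cl(K_n)$ (which is nontrivial in general) and the unit group $\OO_{K_n}^\times$, which has rank~$1$—there is a fundamental unit, not merely $\langle i\rangle$. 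Neither point is addressed, and the phrase ``a parallel check absorbs the unit $u$'' does not cover the fundamental unit of $K_n$.

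The paper sidesteps this entirely by first showing that any unramified quadratic $L/K_n$ is of the form $K_n(\sqrt{\gamma})$ with $\gamma\in\Q(i)^\ast$. The argument is short: since $h(\Q(i))=1$, the generator of $\Gal(K_n/\Q(i))$ acts as $-\mathrm{id}$ on $\Cl(K_n)$, so the Hilbert class field of $K_n$ (hence $L$) is Galois over $\Q(i)$; choosing an inertia element at a place ramified in $K_n/\Q(i)$ then forces $\Gal(L/\Q(i))$ to be biquadratic, i.e.\ $L=\Q(i,\sqrt{n},\sqrt{\gamma})$ with $\gamma\in\Q(i)^\ast$. Once $\gamma\in\Q(i)^\ast$, only the units $\Z[i]^\times=\langle i\rangle$ and the Gaussian primes dividing $2n$ are in play, and the local analysis at $1+i$ goes through exactly as you sketched (the paper's clean criterion: $a+ib$ coprime to $1+i$ gives an extension of $\Q(i)$ unramified at $1+i$ iff $4\mid b$). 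Replace your direct ideal manipulation by this Galois-descent step and your proof is complete.
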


\begin{proof}
It is well known that $\mathbb{Z}[i]$ is a principal ideal domain. It follows that the generator of $\text{Gal}(K_n/\mathbb{Q}(i))$ acts as $-\text{id}$ on $\text{Cl}(K_n)$. In particular, every unramified abelian extension of $K_n$ remains Galois over $\mathbb{Q}(i)$. Furthermore, we also know that the extension $K_n/\mathbb{Q}(i)$ must ramify at some finite place $v$ of $\mathbb{Q}(i)$. Hence an inertia group at $v$ in $\text{Gal}(L/\mathbb{Q}(i))$ must be of order $2$ and project non-trivially in $\text{Gal}(K_n/\mathbb{Q}(i))$. It follows that $L/\mathbb{Q}(i)$ is a biquadratic extension, in other words there must be $\gamma \in \mathbb{Q}(i)^{*}$ with $L = K_n(\sqrt{\gamma})$. 

Next we claim that if we have a finite place $v$ of $\mathbb{Q}(i)$ with $v(\gamma)$ odd, then it must be that $v(n)>0$. Indeed, $K_n/\mathbb{Q}(i)$ is unramified at all places $v$ with $v(n)=0$; observe that this is also correct at $1 + i$, since $n$ is a rational integer. But $\mathbb{Q}(i, \sqrt{\gamma})/\mathbb{Q}(i)$ will certainly ramify at $v$ in case $v(\gamma)$ is odd and hence the extension $K_n(\sqrt{\gamma})/K_n$ will ramify at any place of $K_n$ above $v$. This shows our claim. 

Thanks to the last step and since $\mathbb{Z}[i]$ is a PID, we can suppose that $\gamma$ is an element of $\mathbb{Z}[i]$ that divides $n$ in $\mathbb{Z}[i]$.

Now let $a+ib$ be an element of $\mathbb{Z}[i]$, with $a \not\equiv b \bmod 2$, i.e. $a+ib$ is coprime to $1+i$. Then we claim that $\mathbb{Q}(i, \sqrt{a + ib})$ is unramified at $1 + i$ if and only if $4 \mid b$. To this end, we recall that elements of $\mathbb{Z}_2[i]$ of the shape $1 + (1 + i) u$ or $1 + (1 + i)^3 u$, with $u \in \mathbb{Z}_2[i]^\ast$, yield ramified quadratic extensions of $\mathbb{Q}_2(i)$. We first show that $2 \mid b$. Suppose not. Then, by our assumption on $a+ib$, it must be that $a$ is even. Then we can rewrite $a + ib = 2a' + 2ib' + i = 2(a' + ib') + i$ with $a', b'$ integers. This can be rewritten as $1 + (1 + i) u$ with $u \in \mathbb{Z}_2[i]^\ast$. 

So we must have that $2 \mid b$ and $a \equiv 1 \bmod 2$. Furthermore, since $-1$ is a square in $\mathbb{Q}(i)$, we can assume that $a$ is $3$ modulo $4$. Now suppose that $4$ does not divide $b$. Hence we can rewrite $a + ib = a + 2ib' = -1 + 4a' + 2ib'$, where $a', b'$ are integers and $b'$ is odd, which equals $-1 + 2i + 4z = 1 - 2(1 - i) + 4z$ with a $z \in \mathbb{Z}[i]$. This has the shape $1+(1+i)^3u$ with $u$ a unit in $\mathbb{Z}_2[i]$. Therefore it yields a ramified extension of $\mathbb{Q}_2(i)$ and the desired claim is proved. 

We have obtained that $\gamma=a+ib$ is a divisor of $n$ with $a$ odd and $b$ divisible by $4$. Observe furthermore that, since $-1$ is a square in $\mathbb{Q}(i)$, we can reduce to the case that $a$ is $1$ modulo $4$. Now it is straightforward to check that $\gamma$ is precisely one of the elements listed above. Reversely, it is easy to check that all such $\gamma$ give an unramified extension.
\end{proof}

We denote by $\text{Gn}(K_n)$ the span of the elements listed in Proposition \ref{genus theory} in $\frac{\mathbb{Q}(i)^{*}}{\mathbb{Q}(i)^{*2}}$. More precisely, these are the elements
\[
\prod_{h \in [r]} \pi_h^{\epsilon_1(h)} \overline{\pi_h}^{\epsilon_2(h)} \cdot \prod_{k \in [s]} q_k^{\alpha (k)}
\] 
as $\epsilon_1, \epsilon_2: [r] \to \{0,1\}, \alpha: [s] \to \{0,1\}$ varies and satisfies equation (\ref{eSumMod2}).

\section{A criterion for the $4$-torsion for generic $n$}
\label{s4Torsion}
We shall now establish a general fact that will be the key tool to exploit the condition of genericity on $n$. To do so, we start by recalling the inflation--restriction exact sequence. Let $G$ be a profinite group, $N$ a normal open subgroup and $A$ a discrete $G$-module. Note that $G/N$ naturally acts on $A^N$. Then we have an exact sequence
\[
0 \rightarrow H^1(G/N, A^N) \rightarrow H^1(G, A) \rightarrow H^1(N, A)^{G/N} \rightarrow H^2(G/N, A^N) \rightarrow H^2(G, A),
\]
where the first and fourth map are inflation, the second map is restriction and the third map is transgression. We remark that $G$ naturally acts on $H^1(N, A)$ by sending a cocycle $f: N \rightarrow A$ to $(g \cdot f)(n) = g \cdot f(g^{-1} n g)$ and this action descends to an action of $G/N$.

For a field $K$, we denote by $G_K$ the absolute Galois group of $K$. If $L/K$ is any finite Galois extension of fields of characteristic different from $2$, we apply the inflation--restriction sequence with $G = G_K$, $N = G_L$ and $A = \mathbb{F}_2$ with trivial action. There is, by Kummer theory, an isomorphism
$$
H^1(N, A)^{G/N} \cong \left(\frac{L^\ast}{{L^{\ast}}^2}\right)^{\text{Gal}(L/K)}.
$$
The map from right to left is given by sending $\alpha$ to the character $\chi_\alpha$, which is by definition the character corresponding to $\sqrt{\alpha}$. Combining this with the inflation--restriction exact sequence, we hence obtain a natural map
$$
r: \left(\frac{L^\ast}{{L^{\ast}}^2}\right)^{\text{Gal}(L/K)} \to H^2(\text{Gal}(L/K),\mathbb{F}_2),
$$
whose kernel consists precisely of the image of $K^{*}$ in $L^\ast/{L^{\ast}}^2$ and whose image consists precisely of those classes in $H^2(\text{Gal}(L/K), \mathbb{F}_2)$ that become trivial when inflated to $H^2(G_K, \mathbb{F}_2)$. We start with a lemma.

\begin{lemma}
\label{lLocal}
Let $E$ be a local field of characteristic $0$ and let $F/E$ be an unramified extension. Then the inflation map 
$$
H^2(\emph{Gal}(F/E), \mathbb{F}_2) \rightarrow H^2(G_E, \mathbb{F}_2)
$$
is the zero map.
\end{lemma}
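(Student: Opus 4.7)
The plan is to factor the inflation through the Galois group of the maximal unramified extension $E^{\text{nr}}$ and then invoke the vanishing of $H^2$ for the resulting procyclic group.

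First, I would use that any unramified extension of $E$ is contained in $E^{\text{nr}}$. Since $F/E$ is unramified, $F \subseteq E^{\text{nr}}$, so the restriction $G_E \twoheadrightarrow \Gal(F/E)$ factors through $G_E \twoheadrightarrow \Gal(E^{\text{nr}}/E) \twoheadrightarrow \Gal(F/E)$. By functoriality of inflation, the map in the statement factors as
\[
H^2(\Gal(F/E), \mathbb{F}_2) \to H^2(\Gal(E^{\text{nr}}/E), \mathbb{F}_2) \to H^2(G_E, \mathbb{F}_2),
\]
so it suffices to show $H^2(\Gal(E^{\text{nr}}/E), \mathbb{F}_2) = 0$. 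Since $E$ is a local field of characteristic $0$, the classical isomorphism $\Gal(E^{\text{nr}}/E) \cong \widehat{\mathbb{Z}}$ (topologically generated by the Frobenius) reduces the task further to proving $H^2(\widehat{\mathbb{Z}}, \mathbb{F}_2) = 0$.

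I would establish this vanishing by explicitly splitting any central extension. A class in $H^2(\widehat{\mathbb{Z}}, \mathbb{F}_2)$ is represented by a central extension $1 \to \mathbb{F}_2 \to \tilde{G} \to \widehat{\mathbb{Z}} \to 1$ of profinite groups. Choose any lift $\tilde{\phi} \in \tilde{G}$ of a topological generator $\phi$ of $\widehat{\mathbb{Z}}$. The pro-exponentiation $z \mapsto \tilde{\phi}^z$, which is well-defined and continuous in the profinite group $\tilde{G}$, yields a continuous section $\widehat{\mathbb{Z}} \to \tilde{G}$. Hence the extension splits, confirming $H^2(\widehat{\mathbb{Z}}, \mathbb{F}_2) = 0$; equivalently, $\widehat{\mathbb{Z}}$ has cohomological dimension one on torsion modules.

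There is essentially no real obstacle here; the argument is purely structural. The only point that deserves a sentence of justification is the well-definedness of $\tilde{\phi}^z$ for $z \in \widehat{\mathbb{Z}}$, but this is automatic because every element of a profinite group admits continuous $\widehat{\mathbb{Z}}$-powers. As a more computational alternative, one can verify the vanishing at each finite cyclic quotient: $H^2(C_n, \mathbb{F}_2) \cong \mathbb{Z}/(2,n)$ and the inflation $H^2(C_n, \mathbb{F}_2) \to H^2(C_{2n}, \mathbb{F}_2)$ is the zero map for every $n$, so every class is killed in the direct limit.
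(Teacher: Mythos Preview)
Your argument is correct. The paper itself does not give a proof here; it simply cites \cite[Prop.~4.4]{Koy-Pag2} as a special case. Your route is therefore more self-contained: you factor the inflation through $\Gal(E^{\mathrm{nr}}/E)\cong\widehat{\Z}$ and then observe that $H^2(\widehat{\Z},\mathbb{F}_2)=0$, either by explicitly splitting any central profinite extension of $\widehat{\Z}$ via $z\mapsto\tilde{\phi}^{\,z}$, or equivalently by the direct-limit computation over cyclic quotients. Both justifications are sound; the splitting argument is essentially the statement that $\widehat{\Z}$ is profinite-free of rank one (hence of cohomological dimension~$1$), which is the cleanest way to phrase it. The only cosmetic suggestion is that once you invoke $\mathrm{cd}(\widehat{\Z})=1$, the subsequent explicit splitting and the alternative cyclic-quotient calculation are redundant, so in a final write-up you could keep just one of the three.
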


\begin{proof}
This is a special case of \cite[Prop. 4.4] {Koy-Pag2}.
\end{proof}

We can now prove the following proposition, which is based on ideas from \cite[Proposition~4.10]{Koy-Pag}. We say that a class $\theta \in H^2(\text{Gal}(L/\mathbb{Q}(i)), \mathbb{F}_2)$ is \emph{locally trivial} at a place $v$ of $\Q(i)$ if $\theta$ is trivial in $H^2(G_{\Q(i)_v}, \mathbb{F}_2)$.

\begin{proposition} 
\label{cleaning the ramification for Q(i)}
Let $L/\mathbb{Q}(i)$ be a Galois $2$-extension of $\mathbb{Q}(i)$ and take $p$ to be a rational prime that is congruent to $5$ modulo $8$. Suppose that $L$ ramifies at both places of $\mathbb{Q}(i)$ lying above $p$. Assume that $1 + i$ is unramified in $L/\mathbb{Q}(i)$. Let $\theta \in H^2(\emph{Gal}(L/\mathbb{Q}(i)), \mathbb{F}_2)$ be such that the inflation of $\theta$ to $G_{\mathbb{Q}(i)}$ is locally trivial at all the places of $\mathbb{Q}(i)$ which ramify in $L/\mathbb{Q}(i)$. Suppose furthermore that for each odd place $v$ of $\mathbb{Q}(i)$, the class $\theta$ restricted to an inertia subgroup $I_v$ of $\emph{Gal}(L/\mathbb{Q}(i))$ yields a trivial element of $H^2(I_v, \mathbb{F}_2)$. 

Then there exists $\alpha \in \left(\frac{L^\ast}{{L^{\ast}}^2}\right)^{\emph{Gal}(L/\mathbb{Q}(i))}$ with $r(\alpha)=\theta$ and $L(\sqrt{\alpha})/L$ unramified. 
\end{proposition}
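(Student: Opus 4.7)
The strategy is two-stage. First, I would exhibit some $\alpha_0 \in \left(\frac{L^*}{L^{*2}}\right)^{\text{Gal}(L/\mathbb{Q}(i))}$ with $r(\alpha_0) = \theta$; second, I would correct $\alpha_0$ by multiplying by a suitable $\gamma \in \mathbb{Q}(i)^*$ (which lies in $\ker r$) to clear all ramification from $L(\sqrt{\alpha_0 \gamma})/L$.

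For the existence of $\alpha_0$, the image of $r$ consists precisely of classes in $H^2(\text{Gal}(L/\mathbb{Q}(i)), \mathbb{F}_2)$ whose inflation to $H^2(G_{\mathbb{Q}(i)}, \mathbb{F}_2)$ is trivial. Since $\mathbb{Q}(i)$ has only complex archimedean places, $H^2(G_{\mathbb{Q}(i)}, \mathbb{F}_2)$ sits inside $\text{Br}(\mathbb{Q}(i))[2]$, and by Brauer--Hasse--Noether the global vanishing of the inflation of $\theta$ reduces to local vanishing at every finite place. At ramified places this is the hypothesis; at every other finite place---including $1+i$, by the unramifiedness assumption on $L$ there---the inflation factors through $H^2(\text{Gal}(L_w/\mathbb{Q}(i)_v), \mathbb{F}_2)$ for an unramified Galois local extension, which is killed by Lemma~\ref{lLocal}. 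This produces $\alpha_0$.

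For the second stage I would argue place by place. At each odd place $v$ of $\mathbb{Q}(i)$ and each place $w$ of $L$ above $v$, the restriction $\chi_{\alpha_0}|_{I_w}$ of the Kummer character measures the obstruction to unramifiedness. Combining the hypothesis $\theta|_{I_v} = 0$ in $H^2(I_v, \mathbb{F}_2)$ with the local inflation--restriction sequence for $I_v \subset \text{Gal}(L/\mathbb{Q}(i))$ should produce, for each ramified odd $v$, a local correction $\gamma_v \in \mathbb{Q}(i)_v^*/\mathbb{Q}(i)_v^{*2}$ such that the character of $\alpha_0 \cdot \gamma_v^{-1}$ on $I_w$ becomes trivial. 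I would then invoke Grunwald--Wang for squares over $\mathbb{Q}(i)$---whose special case does not arise, since $i \in \mathbb{Q}(i)$---to produce a single global $\gamma \in \mathbb{Q}(i)^*$ realising these local corrections simultaneously.

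The main obstacle I anticipate is the prime $1+i$, at which no analogue of the hypothesis $\theta|_{I_v}=0$ is supplied and the Kummer analysis in residue characteristic $2$ is delicate. This is exactly where the assumption that a rational prime $p \equiv 5 \pmod 8$ is fully ramified in $L/\mathbb{Q}(i)$ at both Gaussian primes $\pi, \overline{\pi}$ above $p$ should enter the argument. Because $p$ splits in $\mathbb{Q}(i)$ and $2$ is a non-square modulo $p$, the classes of $\pi$ and $\overline{\pi}$ provide independent degrees of freedom inside $\mathbb{Q}(i)_{1+i}^*/\mathbb{Q}(i)_{1+i}^{*2}$ that one can tune when choosing $\gamma$, so that $\alpha_0 \gamma$ can be arranged to be unramified at $1+i$ without disturbing the conditions imposed at odd places. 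Verifying that these degrees of freedom suffice, and that the global $\gamma$ produced by Grunwald--Wang can be made simultaneously compatible with the $1+i$ conditions, is where most of the technical effort should go.
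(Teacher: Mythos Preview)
Your two-stage outline and the first stage (existence of $\alpha_0$) match the paper's proof essentially word for word. The differences, and a genuine gap, appear in the second stage.

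\medskip

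\textbf{The gap: Grunwald--Wang does not control new ramification.} You propose to find local corrections $\gamma_v$ at the (finitely many) odd places where $L(\sqrt{\alpha_0})/L$ ramifies and then globalise them by Grunwald--Wang. The problem is that the global $\gamma$ produced this way may well have odd valuation at some \emph{other} odd place $v'$ that was unramified in $L/\mathbb{Q}(i)$, thereby creating new ramification in $L(\sqrt{\alpha_0\gamma})/L$ at the places of $L$ above $v'$. Grunwald--Wang only matches prescribed local square-classes at a finite set; it says nothing about valuations outside that set. So as written your second stage does not terminate.

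\medskip

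\textbf{How the paper avoids this.} The paper never appeals to Grunwald--Wang. Instead it uses two concrete observations which together make the twisting completely explicit. First, at each odd place $v$ \emph{ramified} in $L/\mathbb{Q}(i)$, the hypothesis $\theta|_{I_v}=0$ together with local inflation--restriction forces $\chi_\alpha$ to be the restriction of a character of the maximal unramified subextension of $L_w/\mathbb{Q}(i)_v$; since at an odd place such characters lie in the span of the unramified character and a character already defined over $\mathbb{Q}(i)_v$, the extension $L(\sqrt{\alpha})/L$ is \emph{automatically} unramified at $v$ for \emph{every} $\alpha$ with $r(\alpha)=\theta$. No correction is needed there. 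Second, at each odd place $v$ \emph{unramified} in $L/\mathbb{Q}(i)$ where $L(\sqrt{\alpha})/L$ happens to ramify, one simply multiplies $\alpha$ by a global uniformiser $\pi_v\in\mathbb{Z}[i]$ at $v$; since $\mathbb{Z}[i]$ is a PID this is available, and since $\alpha$ is Galois-invariant modulo squares this single twist clears all places of $L$ above $v$ while leaving the other odd places untouched.

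The first observation is also the missing ingredient in your treatment of $1+i$. You correctly note that $\{\pi,\overline{\pi}\}$ contributes independent classes in $\mathbb{Q}_2(i)^*/\mathbb{Q}_2(i)^{*2}$ when $p\equiv 5\pmod 8$; in fact $\{\pi,\overline{\pi},i,1+i\}$ is a \emph{basis} of that group, so any required local correction $c$ at $1+i$ can be realised inside this span. But twisting by $\pi$ or $\overline{\pi}$ visibly changes the valuation at the odd places above $p$. What guarantees that this does not reintroduce ramification there is precisely the automatic-unramifiedness observation: both places above $p$ are ramified in $L$ by hypothesis, so $\theta|_{I_v}=0$ ensures $L(\sqrt{\alpha c})/L$ remains unramified at them regardless of $c$. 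You gesture at ``without disturbing the conditions imposed at odd places'' but do not supply this reason, and without it the argument is incomplete.
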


\begin{proof}
We first claim that there exists $\alpha$ such that $r(\alpha)=\theta$ and that the extension $L(\sqrt{\alpha})/L$ is unramified above any odd place. Consider the exact sequence
$$
1 \rightarrow \{\pm 1\} \rightarrow \overline{\Q(i)}^\ast \rightarrow \overline{\Q(i)}^\ast \rightarrow 1,
$$
where the last map is squaring. Taking Galois cohomology and using Hilbert 90, we deduce that there is an injection
$$
0 \rightarrow H^2(G_{\Q(i)}, \mathbb{F}_2) \rightarrow H^2(G_{\Q(i)}, \overline{\Q(i)}^\ast).
$$
Then, by class field theory, we have another injection
$$
0 \rightarrow H^2(G_{\Q(i)}, \overline{\Q(i)}^\ast) \rightarrow \bigoplus_{v \in \Omega_{\Q(i)}} H^2(G_{\Q(i)_v}, \overline{\Q(i)_v}^\ast),
$$
where $\Omega_{\Q(i)}$ are the places of $\Q(i)$. Hence, to check if $\theta$ is trivial in $H^2(G_{\Q(i)}, \mathbb{F}_2)$, we can check this locally in $H^2(G_{\Q(i)_v}, \overline{\Q(i)_v}^\ast)$ for every $v \in \Omega_{\Q(i)}$. By assumption $\theta$ is trivial locally at all places $v$ that ramify in $L/\Q(i)$. Furthermore, Lemma \ref{lLocal} shows that $\theta$ is trivial at the unramified places, and hence we have shown that $\theta$ is trivial in $H^2(G_{\Q(i)}, \mathbb{F}_2)$. We deduce that there is $\alpha$ with $r(\alpha) = \theta$.

Our next task is to adjust the ramification at the odd places. Suppose that $L(\sqrt{\alpha})/L$ is ramified at some odd place $w$ of $L$. If $v$ is the place of $\Q(i)$ below $w$, and $v$ is unramified in $L/\Q(i)$, then we twist by $\chi_\pi$ with $\pi$ a prime element of $\Z[i]$ corresponding to $v$. Since $\alpha$ is invariant modulo squares, this ensures that $v$ is unramified in $L(\sqrt{\alpha})/L$ without changing the ramification at any other odd place.

Now suppose instead that the place $v$ below $w$ is ramified in $L/\Q(i)$. We filter $L_w/\Q(i)_v$ as a tower $L_w/K/\Q(i)_v$, where $K$ is the largest unramified extension of $\Q(i)_v$ inside $L_w$. The assumption that $\theta$ is trivial when restricted to $I_v$ precisely implies, by the inflation--restriction sequence, that $\chi_\alpha$ equals the restriction of some central character $\chi$ from $G_K$. Since $v$ is an odd place, such characters are in the span of the unramified character of $K$ and a ramified character of $\Q(i)_v$. Therefore the extension $L(\sqrt{\alpha})/L$ is automatically unramified at $v$ for any choice of $\alpha$ with $r(\alpha) = \theta$.

Having established the claim, it remains to adjust the ramification at $1 + i$. Let $w$ be a place of $L$ above $1 + i$. By assumption $L_w/\mathbb{Q}_2(i)$ is unramified. Therefore the Galois group $\text{Gal}(L_w/\mathbb{Q}_2(i))$ is cyclic and thus $H^2(\text{Gal}(L_w/\mathbb{Q}_2(i)), \mathbb{F}_2)$ is cyclic of order $2$. The non-trivial element in $H^2(\text{Gal}(L_w/\mathbb{Q}_2(i)), \mathbb{F}_2)$ is realized via an unramified extension. Hence there exists $c \in \mathbb{Q}(i)^{*}$ such that $c \alpha$ yields an unramified class of $\frac{L_w^\ast}{L_w^{\ast 2}}$ for all choices of $w$ above $v$. Furthermore, since $\alpha$ is invariant modulo squares, the same $c$ will work simultaneously for all places $w$ of $L$ above $1 + i$. 

Let now $p = \pi \overline{\pi}$ be a factorization of our prime $p \equiv 5 \bmod 8$ in $\mathbb{Z}[i]$. Observe that multiplying $\alpha$ by elements in the span of $\{\pi, \overline{\pi}, i, 1 + i\}$ changes only the ramification at the places above $2$. Indeed, this follows from the assumption that $\theta$ is trivial when restricted to $H^2(I_v, \mathbb{F}_2)$. Now $p \equiv 5 \bmod 8$ implies that $\{\pi, \overline{\pi}, i, 1 + i\}$ is a basis of $\frac{\mathbb{Q}_2(i)^\ast}{{\mathbb{Q}_2(i)^\ast}^2}$. Hence $c$ can be picked in the space $\langle \{\pi, \overline{\pi}, i, 1 + i\} \rangle$, cleaning the ramification precisely at the places above $1 + i$ without affecting any other place of $L$, which concludes our proof.
\end{proof}

Let $n \in \mathbb{Z}_{\geq 3}$ be odd, squarefree and generic. We can now describe the space $2\text{Cl}(K_n)^{\vee}[4]$. For any place $v$ of $\mathbb{Q}(i)$ we denote by $(-,-)_v$ the Hilbert symbol pairing defined on $\mathbb{Q}(i)_v^\ast$ and attaining values in $\{1,-1\}$. Recall that for $x,y \in \mathbb{Q}(i)_v^\ast$, one has that $(x,y)_v=1$ if and only if $\chi_x \cup \chi_y$ yields a trivial class in $H^2(G_{\mathbb{Q}(i)_v},\mathbb{F}_2)$.

\begin{proposition} 
\label{4rk space for odd n} 
Let $n \in \mathbb{Z}_{\geq 3}$ be odd, squarefree and generic. Let $\alpha$ be in $\emph{Gn}(K_n)$. Then the character $\chi_{\alpha}$ is in $2\emph{Cl}(K_n)^{\vee}[4]$ if and only if for any finite place $v$ with $v(n) \neq 0$ one has 
$$
\left(\alpha,\frac{n}{\alpha}\right)_v=1.
$$
\end{proposition}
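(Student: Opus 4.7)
The plan is to use class field theory to recast $\chi_\alpha \in 2\text{Cl}(K_n)^\vee[4]$ as the existence of a cyclic unramified degree $4$ extension $M/K_n$ containing $L := K_n(\sqrt{\alpha}) = \Q(i, \sqrt{n}, \sqrt{\alpha})$, and then to translate this into the Hilbert symbol condition through the cohomology of $G := \text{Gal}(L/\Q(i))$. Write $G = \langle \sigma, \tau \rangle$, where $\sigma$ fixes $\Q(i, \sqrt{\alpha})$ and $\tau$ fixes $\Q(i, \sqrt{n/\alpha})$, so that $\sigma\tau$ generates $\text{Gal}(L/K_n)$, and let $\chi_\sigma, \chi_\tau \in H^1(G, \F_2)$ be the dual characters. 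Under Kummer theory one has $\chi_\sigma = \chi_{n/\alpha}$ and $\chi_\tau = \chi_\alpha$.

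For ``$\Leftarrow$'', assuming the Hilbert symbols are trivial, I would apply Proposition \ref{cleaning the ramification for Q(i)} to the class $\theta := \chi_\sigma \cup \chi_\tau \in H^2(G, \F_2)$. Its hypotheses are: (i) $L$ ramifies at both primes of $\Z[i]$ above some $p \equiv 5 \bmod 8$ dividing $n$ (which exists by genericity), since $v_\pi(n) = 1$; (ii) $L$ is unramified at $1 + i$ because $\alpha, n/\alpha \in \text{Gn}(K_n)$ are congruent to $\pm 1$ modulo $4\Z[i]$, as in the proof of Proposition \ref{genus theory}; (iii) the inflation of $\theta$ to $H^2(G_{\Q(i)_v}, \F_2)$ equals the Hilbert symbol $(\alpha, n/\alpha)_v$, trivial by hypothesis when $v \mid n$ and automatic otherwise; (iv) at each odd ramified $v$, squarefreeness of $n$ forces $v$ to divide exactly one of $\alpha, n/\alpha$, and the inertia $I_v \subset G$ is then $\langle \tau \rangle$ or $\langle \sigma \rangle$ accordingly, so one of $\chi_\sigma|_{I_v}, \chi_\tau|_{I_v}$ vanishes and $\theta|_{I_v} = 0$. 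The proposition supplies $\alpha' \in (L^*/L^{*2})^G$ with $r(\alpha') = \theta$ and $L(\sqrt{\alpha'})/L$ unramified; setting $M := L(\sqrt{\alpha'})$, the extension $M/K_n$ is unramified, and the computation $\theta(\sigma\tau, \sigma\tau) = \chi_\sigma(\sigma\tau) \cdot \chi_\tau(\sigma\tau) = 1$ forces every lift of $\sigma\tau$ to $\text{Gal}(M/\Q(i))$ to have order $4$, so that $\text{Gal}(M/K_n) \cong \Z/4\Z$.

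For ``$\Rightarrow$'', given $M/K_n$ as above, I would argue that $M/\Q(i)$ is Galois because (as in Proposition \ref{genus theory}) $\Z[i]$ is a PID, forcing the non-trivial element of $\text{Gal}(K_n/\Q(i))$ to act as $-1$ on $\text{Cl}(K_n)$. The induced inversion action on $\text{Gal}(M/K_n) \cong \Z/4\Z$ makes $\text{Gal}(M/\Q(i))$ non-abelian, hence $D_4$ or $Q_8$. Writing $M = L(\sqrt{\alpha'})$ with $\alpha'$ $G$-invariant modulo squares, the class $r(\alpha') \in H^2(G, \F_2) = \langle \chi_\sigma^2, \chi_\tau^2, \chi_\sigma \cup \chi_\tau \rangle$ is the classifying class of the extension $\text{Gal}(M/\Q(i)) \to G$; a direct enumeration of central extensions of $(\Z/2\Z)^2$ by $\Z/2\Z$ shows that in both the $D_4$ and $Q_8$ cases (the only two compatible with $\sigma\tau$ lifting to an order $4$ element under inversion) the $\chi_\sigma \cup \chi_\tau$-coefficient of $r(\alpha')$ is $1$. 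Since $r(\alpha')$ lies in the image of $r$, it inflates to zero in $H^2(G_{\Q(i)}, \F_2)$ and hence vanishes locally at every $v$. As $-1$ is a square in $\Q(i)$, the classes $\chi_\sigma^2, \chi_\tau^2$ inflate to the trivial Hilbert symbols $(n/\alpha, -1)_v, (\alpha, -1)_v$ locally, so only the $\chi_\sigma \cup \chi_\tau$-term contributes and $(\alpha, n/\alpha)_v = 1$ for every $v \mid n$.

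The main obstacle is the necessity direction: it requires a careful enumeration of the eight central extensions of $(\Z/2\Z)^2$ by $\Z/2\Z$, together with the analysis of which Galois groups are compatible with the inversion action, to confirm that $r(\alpha')$ always has a nonzero $\chi_\sigma \cup \chi_\tau$-component; once this is secured, the local-to-global principle for $H^2(G_{\Q(i)}, \F_2)$ makes the extraction of the Hilbert symbol identities routine.
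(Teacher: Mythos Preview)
Your argument is correct and essentially parallels the paper's for the ``if'' direction: both verify the hypotheses of Proposition~\ref{cleaning the ramification for Q(i)} for $\theta=\chi_\alpha\cup\chi_{n/\alpha}$ and then read off the cyclic degree~$4$ unramified extension from the resulting $\alpha'$.

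The ``only if'' direction is handled differently. The paper pins down the extension class exactly as $\chi_\alpha\cup\chi_{n/\alpha}$: it chooses inertia elements at a place dividing $\alpha$ and at a place dividing $n/\alpha$, observes that these give order~$2$ lifts of $\tau$ and $\sigma$ in $\text{Gal}(M/\Q(i))$ (since $M/K_n$ is unramified), and hence rules out any $\chi_\sigma^2$ or $\chi_\tau^2$ component in $r(\beta)$ --- the group is forced to be $D_4$ in the specific orientation. You instead argue only that the group is $D_4$ or $Q_8$ (via the inversion action and the order~$4$ lift of $\sigma\tau$), so that the $\chi_\sigma\cup\chi_\tau$-coefficient is~$1$, and then dispose of the possible $\chi_\sigma^2,\chi_\tau^2$ terms by noting that their inflations are $(n/\alpha,-1)_v$ and $(\alpha,-1)_v$, trivial because $-1\in\Q(i)^{\ast 2}$. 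Both routes are valid; the paper's is shorter and identifies the Galois group on the nose, while yours trades the inertia argument for a purely local computation that explicitly exploits the presence of $i$ in the base field.
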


\begin{remark}
\label{rGeneric}
The forward implication will not use that $n$ is generic, but for the other implication this will be crucial.
\end{remark}

\begin{proof}
Observe that the elements $\alpha$ in $\text{Gn}(K_n)$ with $\chi_{\alpha} \in 2\text{Cl}(K_n)^{\vee}[4]$ are, if non-trivial, precisely those with $K_n(\sqrt{\alpha})$ contained in a cyclic degree $4$ unramified extension of $K_n$. Such an extension is Galois over $\mathbb{Q}(i)$ as we argued in Proposition \ref{genus theory}. Furthermore, picking an inertia element at a place dividing $\alpha$ and one at a place dividing $\frac{n}{\alpha}$ gives a lift by involutions of the basis of $\text{Gal}(K_n(\sqrt{\alpha})/\mathbb{Q}(i))$ dual to $\{\chi_\alpha, \chi_{\frac{n}{\alpha}}\}$. This forces $L$ to be $K_n(\sqrt{\alpha}, \sqrt{\beta})$ with 
$$
\beta \in \left(\frac{K_n(\sqrt{\alpha})^\ast}{{K_n(\sqrt{\alpha})^\ast}^2}\right)^{\text{Gal}(K_n(\sqrt{\alpha})/\mathbb{Q}(i))}
$$
and 
$$
r(\beta) = \chi_\alpha \cup \chi_{\frac{n}{\alpha}}.
$$ 
Conversely any time we realize, via the map $r$, this class via an unramified quadratic extension of $K_n(\sqrt{\alpha})$ we conclude that $\chi_\alpha \in 2\text{Cl}(K_n)^{\vee}[4]$. 

Hence, for the only if part, we see that $\chi_\alpha \cup \chi_{\frac{n}{\alpha}}$ must be in the image of $r$. It follows that $\chi_\alpha \cup \chi_{\frac{n}{\alpha}}$ is in the kernel of the inflation to $H^2(G_{\mathbb{Q}(i)}, \mathbb{F}_2)$ by the inflation--restriction exact sequence mentioned at the beginning of this section. But then $\chi_\alpha \cup \chi_{\frac{n}{\alpha}}$ must be locally trivial at all places $v$, and this implies precisely that $\left(\alpha,\frac{n}{\alpha}\right)_v=1$ for all finite places $v$ with $v(n) \neq 0$.

While for the if part, we apply Proposition \ref{cleaning the ramification for Q(i)} with $\theta := \chi_\alpha \cup \chi_{\frac{n}{\alpha}}$. Since $\left(\alpha,\frac{n}{\alpha}\right)_v=1$ for all finite places $v$ with $v(n) \neq 0$ by assumption, we see that $\theta$ is locally trivial at all places of $\Q(i)$ that ramify in $L := K_n(\sqrt{\alpha})$. Furthermore, the shape of $\theta$ shows that the class $\theta$ restricted to an inertia subgroup $I_v$ of $\text{Gal}(L/\mathbb{Q}(i))$ yields a trivial element of $H^2(I_v, \mathbb{F}_2)$ for each odd place $v$ of $\mathbb{Q}(i)$. 

The fact that $n$ is generic ensures that we have a prime $p \equiv 5 \bmod 8$ to which we can apply Proposition \ref{cleaning the ramification for Q(i)}. Then Proposition \ref{cleaning the ramification for Q(i)} gives us the required $\beta$.
\end{proof}

\begin{corollary}
\label{c4rkbig}
Suppose that $n \geq 3$ is odd, squarefree and generic. Then we have
\[
\dim_{\mathbb{F}_2} 2\textup{Cl}(K_n)^{\vee}[4] \geq \omega_3(n) - 1.
\]
\end{corollary}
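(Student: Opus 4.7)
The plan is to prove the claimed bound by constructing, via Proposition \ref{4rk space for odd n}, an $(\omega_3(n)-1)$-dimensional $\F_2$-subspace of $2\text{Cl}(K_n)^\vee[4]$ coming from the rational divisors of $n$. Let $W \subseteq \text{Gn}(K_n)$ be the subspace corresponding to the choices $\epsilon_1(h) = \epsilon_2(h)$ for all $h \in [r]$; equivalently, $W$ is spanned inside $\Q(i)^\ast/\Q(i)^{\ast 2}$ by the classes of $p_1,\ldots,p_r,q_1,\ldots,q_s$. Each such $\alpha$ satisfies \eqref{eSumMod2} automatically because every summand $\epsilon_1(h)+\epsilon_2(h)$ is even, so $\dim_{\F_2} W = r+s$.

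I would then apply Proposition \ref{4rk space for odd n} and compute the Hilbert symbols $(\alpha, n/\alpha)_v$ for $v \mid n$ using the tame symbol formula. At an inert place $v = q_k$ with residue field $\F_{q_k^2}$, both $\alpha$ and $n/\alpha$ are rational integers, so the symbol reduces to the value of the quadratic character of $\F_{q_k^2}^\ast$ at an element of $\F_{q_k}^\ast$. Since $q_k - 1$ divides $(q_k^2-1)/2$, every element of $\F_{q_k}^\ast$ is already a square in $\F_{q_k^2}^\ast$, and these conditions hold automatically. At a split place $v = \pi_h$, the tame symbol evaluates to $\bigl(\tfrac{m}{p_h}\bigr)$, the rational Legendre symbol of whichever of $\alpha, n/\alpha$ is coprime to $p_h$; the same constraint arises at $v = \overline{\pi_h}$. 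The upshot is $r$ linear conditions over $\F_2$ on $W$, cutting out a subspace $W' \subseteq W$ with $\dim_{\F_2} W' \geq s$.

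To convert this into a bound on $\dim 2\text{Cl}(K_n)^\vee[4]$, I would analyze the kernel of the Kummer map $W \hookrightarrow \Q(i)^\ast/\Q(i)^{\ast 2} \to K_n^\ast/K_n^{\ast 2}$. Writing $\beta = c + d\sqrt{n} \in K_n^\ast$ with $\beta^2 \in \Q(i)$ forces $cd=0$, so this kernel is generated by $n$, and its intersection with $W$ is the one-dimensional subspace $\{1,n\}$. Moreover $n$ lies in $W'$ trivially, since $(n,1)_v = 1$ for every $v$. Quotienting out, the image of $W'$ in $\text{Cl}(K_n)^\vee[2]$ therefore has $\F_2$-dimension $\geq s-1 = \omega_3(n)-1$, and by Proposition \ref{4rk space for odd n} this image sits inside $2\text{Cl}(K_n)^\vee[4]$, giving the claim.

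The main technical step will be the tame-symbol bookkeeping. The two key algebraic observations are (i) that the conditions at $\pi_h$ and $\overline{\pi_h}$ coincide because $\alpha$ is rational, and (ii) that the inert-place conditions are vacuous because $\F_{q_k}^\ast \subseteq (\F_{q_k^2}^\ast)^2$; together these reduce what might look like $2r+s$ constraints to just $r$ linear conditions, which is exactly what makes the dimension count yield $\omega_3(n)-1$ rather than something weaker.
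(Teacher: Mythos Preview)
Your argument is correct. You construct an explicit $(\omega_3(n)-1)$-dimensional subspace of $2\text{Cl}(K_n)^\vee[4]$ by restricting to the rational divisors $W=\langle p_1,\dots,p_r,q_1,\dots,q_s\rangle\subseteq\text{Gn}(K_n)$, verifying that the Hilbert-symbol constraints of Proposition~\ref{4rk space for odd n} collapse to at most $r$ linear conditions on $W$ (the inert conditions being vacuous because $\F_{q_k}^\ast\subseteq(\F_{q_k^2}^\ast)^2$, and the two split conditions over each $p_h$ coinciding because $\alpha$ is rational), and then quotienting by $\{1,n\}$.

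The paper takes a dual, top-down route: it applies rank--nullity to the full map $T:\text{Cl}(K_n)^\vee[2]\to\{1,-1\}^{\{v\mid n\}}$, $\chi_\alpha\mapsto\{(\alpha,n/\alpha)_v\}_v$, bounding the \emph{image} of $T$ rather than exhibiting elements of its kernel. The paper observes that all inert components of $T$ are identically $1$ (the key point being that every $\pi_h,\overline{\pi_h},q_{k'}$ reduces to a square in $\F_{q_k^2}$, so this holds for every $\alpha\in\text{Gn}(K_n)$, not only the rational ones), and then invokes Hilbert reciprocity to obtain one further relation among the $2\omega_1(n)$ split components, giving $\dim\operatorname{im}T\le 2\omega_1(n)-1$ and hence $\dim\ker T\ge\omega_3(n)-1$. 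Thus both proofs rest on the same local vanishing at inert places; where you exploit the conjugation symmetry $\pi_h\leftrightarrow\overline{\pi_h}$ on rational $\alpha$ to halve the split constraints, the paper instead uses global reciprocity to kill one dimension of the image. Your approach has the advantage of naming the subspace explicitly; the paper's dimension count is slightly shorter and shows more, namely that the inert components of $T$ vanish on \emph{all} of $\text{Gn}(K_n)$.
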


\begin{proof}
First of all, we have that
\[
\dim_{\mathbb{F}_2} \textup{Cl}(K_n)^{\vee}[2] = 2\omega_1(n) + \omega_3(n) - 2
\]
by Proposition \ref{genus theory} or \cite[Proposition 8]{Fou-Koy}. We now consider the linear map $T: \textup{Cl}(K_n)^{\vee}[2] \rightarrow \{1, -1\}^{\{v \mid n\}}$ that sends $\chi_\alpha$ to $\left\{\left(\alpha,\frac{n}{\alpha}\right)_v\right\}_v$, where $v$ runs through all finite places dividing $n$. If $v$ corresponds to a prime $p \equiv 3 \bmod 4$ in $\Z$ and $\alpha \in \Z$, then we have that
$$
\left(\alpha, \frac{n}{\alpha}\right)_p = (\alpha, n)_p = 1.
$$
Combining this with Hilbert reciprocity, we see that the image of $T$ has dimension at most $2\omega_1(n) - 1$. But, for generic $n$, the kernel of $T$ is precisely $2\textup{Cl}(K_n)^{\vee}[4]$ by Proposition \ref{4rk space for odd n}. Hence the lemma follows from the rank--nullity theorem.
\end{proof}

We can now prove Theorem \ref{tAlgebra}.

\begin{proof}[Proof of Theorem \ref{tAlgebra}]
Take $n > 0$ to be odd and squarefree. There is a natural surjective map
\[
\{\beta \in \Z[i] : \beta \equiv \pm 1 \bmod 4 \Z[i], \beta \mid n\} \rightarrow \text{Gn}(K_n).
\]
The kernel is given by $-1$ and is hence of size $2$. There is also a natural map $\text{Gn}(K_n) \rightarrow \text{Cl}(K_n)^\vee[2]$, given by sending $\beta$ to $\chi_\beta$, with kernel given by $\chi_n$, again of size $2$. By Proposition \ref{4rk space for odd n} it follows that for generic $n$
\[
2^{\rk_4 \text{Cl}(K_n)} = \frac{1}{2}\left|\left\{\alpha \in \text{Gn}(K_n) : \left(\alpha, \frac{n}{\alpha}\right)_v = 1 \text{ for all } v\right\}\right|.
\]
The condition that $(\alpha, n/\alpha)_v = 1$ for all $v$ is equivalent to: for every $\pi \mid \alpha$, we have that $n/\alpha$ is a square modulo $\pi$, and for every $\pi \mid n/\alpha$, we have that $\alpha$ is a square modulo $\pi$. This shows that
\[
2^{\rk_4 \text{Cl}(K_n)} = f(n).
\]
The theorem then follows from Corollary \ref{c4rkbig}.
\end{proof}

\begin{remark}
It is now now easy to prove two of the three inequalities in equation (\ref{eInequalities}). The bound $2^{\rk_4 \textup{Cl}(K_n)} \leq f(n)$ follows Proposition \ref{4rk space for odd n} and Remark \ref{rGeneric}. Furthermore, the proof of Corollary \ref{c4rkbig} shows that
\[
f(n) \geq 2^{\omega_3(n) - 1}
\]
without any assumptions on $n$. The final inequality
\[
\frac{f(n)}{2} \leq 2^{\rk_4 \textup{Cl}(K_n)}
\]
is substantially trickier and we shall only sketch it. From the material here one sees that if $\alpha \in \textup{Gn}(K_n)$ is such that $(\alpha, n/\alpha)_v = 1$ for all $v$, then one can pick a non-trivial point on the conic
\[
x^2 = \alpha y^2 + \frac{n}{\alpha} z^2
\]
such that the extension $\Q(i, \sqrt{n}, \sqrt{\alpha}, \sqrt{x + \sqrt{\alpha} y})/\Q(i, \sqrt{n}, \sqrt{\alpha})$ is only ramified at $2$. Then some local considerations at $2$ finish the proof.
\end{remark}

\section{Convention, definitions and classical lemmas} 
We now pass to the proof of Theorem \ref{central}.

\subsection{Gaussian integers} 
We will follow several conventions that appear in \cite[Chapt. 9.7 \& 9.8]{Ir-Ro} concerning the ring $\Z [i]$ of Gaussian integers. The multiplicative group of its {\it units} is denoted by $\mathbb U:= \{ \pm 1, \pm i\}$. A Gaussian integer $\alpha$ is said to be {\it odd} if its norm ${\rm N} (\alpha) := {\rm N}_{\Q(i)/\Q}(\alpha)$ is odd. This condition holds if and only if $1+i$ does not divide $\alpha$. We say that a Gaussian integer $\alpha$ is {\it primary} if it satisfies the condition
$$
\alpha \equiv 1 \bmod 2(1+i).
$$
A primary element is necessarily odd. For any odd Gaussian integer $\alpha$ the set of its associates $\{\pm \alpha, \pm i \alpha\}$ contains exactly one primary element. A Gaussian integer $z = x+iy$ with $x$ and $y$ in $\Z$ is said to be {\it primitive} if the integers $x$ and $y$ are coprime.

An element of the set $\mathcal P^{\rm odd} := \{ 3, 5, 7, 11, \dots \}$ is called an {\it odd natural prime}. We denote by $\mathcal P^{\rm G}$ the set of the odd primary irreducible Gaussian integers. A Gaussian integer $z$ belongs to $\mathcal P^{\rm G}$ if and only if it satisfies exactly one of the two conditions
\begin{itemize}
\item $-z $ belongs to $\mathcal P^{\rm odd}$ and $z$ is congruent to $3 \bmod 4$,
\item $z$ is primary, $z \overline z$ belongs to $\mathcal P^{\rm odd}$ and is congruent to $1 \bmod 4$.
\end{itemize}

Any odd Gaussian integer $z$ is the product of a unit and of elements of $\mathcal P^{\rm G}$. This decomposition is unique up to the order. When $z$ is primary this unit is equal to $1$. The number of elements of $\mathcal P^{\rm G}$ appearing in this decomposition is denoted by $\widetilde \omega (z)$. In particular, if $n$ is an odd positive integer we have the equality
$$
\widetilde \omega (n) = 2 \omega_1 (n) +\omega_3 (n).
$$

We now give an easy decomposition of a positive integer which will be useful in \S \ref{transformation}.
\begin{lemma} 
\label{decomposition}
Let $n\geq 1$ be an odd squarefree integer and let $\beta_0$, $\beta_1$, $\beta_2$ and $\beta_3$ be four Gaussian integers such that 
\begin{equation}
\label{n=bbbb}
n= \beta_0 \beta_1 \beta_2 \beta_3.
\end{equation}
Then there exist 
\begin{itemize}
\item units $\eta_0$, $\eta_1$, $\eta_2$ and $\eta_3$,
\item positive integers $b_0$, $b_1$, $b_2$ and $b_3$, 
\item primitive Gaussian integers $z_{k,\ell}$ with $0 \leq k \not = \ell \leq 3$,
\end{itemize}
such that for $0\leq k \leq 3$ and $\ell\not= k$, one has the following properties
\begin{enumerate}[label=(\roman*)]
\item 
$$
\beta_k/b_k \text{ is a primitive Gaussian integer,}
$$
\item 
$$
\beta_k =\eta_k b_k \prod_{\ell\not= k} z_{k,\ell},
$$
\item 
$$
z_{\ell, k}=\overline{z_{k,\ell}},
$$
\item
$$
z_{k, \ell} \text{ is primary, }
$$
\item 
$$
\prod_{0\leq k \leq 3} \eta_k =1.
$$
\end{enumerate}
Finally, given $(n, \beta_0, \beta_1, \beta_2, \beta_3)$ satisfying \eqref{n=bbbb}, there is a unique set $\{ \eta_k, b_k, z_{k, \ell}\}$ satisfying the conditions {\it i)},...,{\it v)}.
\end{lemma}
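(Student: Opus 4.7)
The statement is ultimately a bookkeeping consequence of unique factorization in $\mathbb{Z}[i]$ into primary irreducibles, organized by how the Gaussian prime divisors of $n$ are partitioned among the four factors $\beta_0,\beta_1,\beta_2,\beta_3$. The plan is to isolate the rational-integer content of each $\beta_k$ as $b_k$, and to organize the remaining primary Gaussian primes (which necessarily lie over split rational primes $p\equiv 1\bmod 4$) according to which of the three other $\beta_\ell$'s contains the conjugate prime.

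First, since $n$ is odd and squarefree, each Gaussian prime divisor of $n$ occurs with multiplicity one in the factorization $n=\beta_0\beta_1\beta_2\beta_3$, and such divisors are of two types: the inert primes $-q$ with $q\equiv 3\bmod 4$, and the primary primes $\pi$ with $\pi\overline\pi=p\equiv 1\bmod 4$ (note $\overline\pi$ is again primary, by direct check of the condition $\alpha\equiv 1\bmod 2(1+i)$ under complex conjugation using $1-i=-i(1+i)$). I would then define $b_k:=\gcd(\mathrm{Re}(\beta_k),\mathrm{Im}(\beta_k))>0$, so that $\beta_k/b_k$ is primitive by definition, giving~(i); concretely, $b_k$ is the product of the inert primes $q\mid\beta_k$ together with those split primes $p=\pi\overline\pi$ for which \emph{both} $\pi$ and $\overline\pi$ divide $\beta_k$.

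Next, for each $0\le k\ne\ell\le 3$ I would set
\[
z_{k,\ell}:=\prod_{\pi\in\mathcal P^{\mathrm G}:\ \pi\mid\beta_k,\ \overline\pi\mid\beta_\ell}\pi,
\]
the product being over primary $\pi$ lying over split primes which ``cross'' from $\beta_k$ to $\beta_\ell$. This product is primary (product of primary primes), giving~(iv), and the relation $z_{\ell,k}=\overline{z_{k,\ell}}$ of~(iii) is immediate from the symmetry of the defining condition under $\pi\leftrightarrow\overline\pi$. Primitivity of $z_{k,\ell}$ is automatic since no rational prime divides it (by squarefreeness of $n$ together with $\overline\pi\nmid z_{k,\ell}$ for each $\pi$ appearing). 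Grouping the primary factorization of $\beta_k$ into (inert contributions) $\cdot$ (split primes with both conjugates in $\beta_k$) $\cdot$ (split primes crossing to some $\beta_\ell$), and writing the first two groups as $(-1)^{\nu_k}b_k$ where $\nu_k:=\#\{q\equiv 3\bmod 4:q\mid b_k\}$, yields $\beta_k=u_k\cdot(-1)^{\nu_k}\cdot b_k\cdot\prod_{\ell\ne k}z_{k,\ell}$ for a unique unit $u_k\in\mathbb U$, and setting $\eta_k:=u_k(-1)^{\nu_k}$ gives~(ii).

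Finally, to verify~(v), I would take the product over $k$ and use
\[
\prod_{k\ne\ell}z_{k,\ell}=\prod_{k<\ell}z_{k,\ell}\,\overline{z_{k,\ell}}=\prod_{k<\ell}\mathrm{N}(z_{k,\ell})\in\mathbb Z_{>0},
\]
so that $n=(\prod_k\eta_k)\cdot\prod_k b_k\cdot\prod_{k<\ell}\mathrm{N}(z_{k,\ell})$; since $n>0$ and the product on the right (apart from $\prod_k\eta_k\in\mathbb U$) is a positive rational integer, $\prod_k\eta_k=1$. Uniqueness of the tuple $\{\eta_k,b_k,z_{k,\ell}\}$ then follows from unique factorization in $\mathbb{Z}[i]$: $b_k$ is forced as the positive rational content of $\beta_k$, the partition of the crossing primary primes of $\beta_k$ according to which $\beta_\ell$ contains their conjugate is forced, and $\eta_k$ is then determined. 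There is no substantial obstacle here; the only mild care needed is in checking that complex conjugation preserves primarity (so~(iii) and~(iv) are compatible) and that the reality positivity of $n$ exactly forces~(v).
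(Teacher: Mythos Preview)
Your proposal is correct and supplies exactly the argument one would expect: the paper itself offers no proof of this lemma, treating it as an immediate consequence of unique factorization in $\mathbb{Z}[i]$ (the only comment is the subsequent remark that condition (v) follows from the others). Your construction of $b_k$ as the positive rational content, of $z_{k,\ell}$ as the product of primary split primes of $\beta_k$ whose conjugates land in $\beta_\ell$, and the deduction of (v) from the positivity of $n$ and of $\prod_{k<\ell}\mathrm{N}(z_{k,\ell})$, is precisely the natural route and matches what the paper has in mind.
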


\begin{remark}
In this decomposition no $z_{k, \ell}$ is divisible by some element of $\mathcal P^{\rm odd}$. The elements of the set $\{b_k, z_{k,\ell}\}$ are coprime in pairs since $n$ is squarefree. To lighten some notations, we will write $z_{k \ell}$ instead of $z_{k, \ell}$. Note that condition {\it v)} follows from the other conditions.
\end{remark}
\subsection{Sums of multiplicative functions}
We introduce the notation
$$
\LL := \log 2x.
$$ 
When bounding several error terms trivially, we will frequently use the following 
\begin{lemma}
\label{easy1} Let $\kappa >0$ be fixed. Then uniformly for $x \geq 1$, the following bounds hold true 
$$
\sum_{ n\leq x} \mu^2 (n) \kappa^{\omega (n)} \ll x\,\LL^{\kappa-1},
$$
$$
\sum_{ n\leq x} \mu^2 (n) \kappa^{\omega (n)}n^{-1} \ll \LL^{\kappa},
$$
and for $\ell =1$ or $3$
$$
\sum_{\substack{n\leq x \\ p \mid n \Rightarrow p \equiv \ell \bmod 4}} \mu^2 (n) \kappa^{\omega (n)} \ll x\, \LL^{\kappa/2-1},
$$
$$
\sum_{\substack{n\leq x \\ p \mid n \Rightarrow p \equiv \ell \bmod 4}} \mu^2 (n) \kappa^{\omega (n)}n^{-1} \ll \LL^{\kappa/2}.
$$
\end{lemma}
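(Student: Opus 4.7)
The plan is to recognise all four inequalities as standard upper bounds for mean values of non-negative multiplicative functions, which follow from the Selberg--Delange method or, more elementarily, from Shiu's theorem. For the first bound I would let $g(n) := \mu^2(n)\kappa^{\omega(n)}$; this is multiplicative with $g(p) = \kappa$ and $g(p^k) = 0$ for $k \geq 2$, and its Dirichlet series factors as
\[
\sum_{n \geq 1} \frac{g(n)}{n^s} = \prod_p \Bigl(1 + \frac{\kappa}{p^s}\Bigr) = \zeta(s)^\kappa \, H(s),
\]
where $H(s) := \prod_p (1 + \kappa p^{-s})(1 - p^{-s})^\kappa$ is holomorphic and bounded in the half-plane $\mathop{\rm Re}(s) > 1/2$. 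The Selberg--Delange theorem (see Tenenbaum, \emph{Introduction to Analytic and Probabilistic Number Theory}, Chap.~II.5) then yields $\sum_{n \leq x} g(n) \ll x(\log x)^{\kappa-1}$, which is the first bound. The second bound follows from the first by partial summation, using $\int_1^x t^{-1}(\log 2t)^{\kappa - 1}\, dt \ll \LL^\kappa$.

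For the restricted versions, let $\chi$ denote the non-trivial Dirichlet character modulo $4$ and write $g_\ell$ for the multiplicative function agreeing with $g$ on primes $\equiv \ell \bmod 4$ and vanishing on all other primes. A direct computation of Euler factors (using $\chi(p) = +1$ for $p \equiv 1 \bmod 4$ and $\chi(p) = -1$ for $p \equiv 3 \bmod 4$) yields
\[
\sum_{n \geq 1} \frac{g_\ell(n)}{n^s} = \bigl(\zeta(s)\, L(s, \chi)^{\eps_\ell}\bigr)^{\kappa/2}\, H_\ell(s),
\]
with $\eps_1 = +1$, $\eps_3 = -1$, and $H_\ell$ holomorphic and bounded in $\mathop{\rm Re}(s) > 1/2$. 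Since $L(s, \chi)$ is entire and non-vanishing at $s = 1$, only the factor $\zeta(s)^{\kappa/2}$ contributes a singularity there, so Selberg--Delange gives $\sum_{n \leq x} g_\ell(n) \ll x (\log x)^{\kappa/2 - 1}$, and one more partial summation produces the fourth bound.

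The only point requiring any care is the half-power Euler-product identity in the restricted case, which reflects the fact that the primes $\equiv \ell \bmod 4$ have Dirichlet density $1/2$: the pole order $\kappa/2$ at $s = 1$ is forced by pole-counting, and the discrepancy with the actual local factors $(1 + \kappa p^{-s})$ is absorbed into the holomorphic correction $H_\ell$. A fully elementary alternative, entirely sufficient for the upper bounds claimed here, is to invoke Shiu's theorem applied to $g$ and to each $g_\ell$, which yields all four estimates directly from the prime-wise values $g(p) = \kappa$ on the relevant primes, bypassing contour integration altogether.
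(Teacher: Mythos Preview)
Your argument is correct. The Selberg--Delange method (or Shiu's theorem, or indeed Wirsing's mean-value theorem) yields all four bounds exactly as you describe, and the Euler-product factorisations you wrote down are the right ones; the only singularity at $s=1$ comes from $\zeta(s)^\kappa$ in the unrestricted case and from $\zeta(s)^{\kappa/2}$ in the restricted case, since $L(s,\chi)^{\pm\kappa/2}$ is holomorphic and non-vanishing in a neighbourhood of $s=1$.

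There is nothing to compare against: the paper states Lemma~\ref{easy1} without proof, treating it as a standard estimate from analytic number theory. Your write-up therefore supplies exactly what the paper omits. If anything, for a lemma flagged as routine you could shorten the exposition by citing Shiu's theorem directly (the prime sums $\sum_{p\leq x} g(p)/p$ and $\sum_{p\leq x,\, p\equiv\ell\bmod 4} g_\ell(p)/p$ are asymptotic to $\kappa\log\log x$ and $(\kappa/2)\log\log x$ respectively, which immediately gives the exponents $\kappa-1$ and $\kappa/2-1$), but the Selberg--Delange route you chose is equally valid and perhaps more transparent about where the exponents come from.
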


\subsection{Characters to detect squares} 
In definition \eqref{deff1} we need to detect whether a Gaussian integer is a square or not modulo a given Gaussian prime $\pi$. This detection will be accomplished by a character, which generalizes the Legendre symbol to the ring of Gaussian integers. If $\alpha$ is a non zero Gaussian integer, the number of residue classes of $\Z[i]$ modulo $\alpha \Z[i]$ is ${\mathrm N}(\alpha)$ and $\phi(\alpha)$ is the number of these classes which are coprime with $\alpha$. 
\begin{definition}
Let $\pi$ be an odd irreducible element of $\Z[i]$ and let $\alpha$ be an element of $\Z [i]$. Then we put
$$
\Bigl[ \frac \alpha \pi \Bigr] := \begin{cases} 
0 & \text{ if } \pi \mid \alpha,\\
1 & \text{ if } \pi \nmid \alpha \text{ and } \alpha \text{ is a square } \bmod \pi,\\
-1 & \text{ if } \pi \nmid \alpha \text{ and } \alpha \text{ is not a square } \bmod \pi.\\
\end{cases}
$$
\end{definition}
This character is sometimes denoted by $\displaystyle{\Bigl(\frac \alpha \pi \Bigr)_{\Q (i), 2}}$. It has the important 
property to be the square of the quartic character $\chi_{\pi} (\alpha)$
which is for instance defined in \cite[p. 122]{Ir-Ro} and it plays a central role in \cite[\S 4]{Fou-Klu-Ann}. We extend $[\frac \cdot \cdot]$ by multiplicativity to odd composite moduli $\beta$ factorized as a product of irreducible elements $\beta =\pi_1 \cdots \pi_s$, by the formula
$$
\Bigr[ \frac \alpha \beta \Bigr] = \Bigl[ \frac \alpha{\pi_1}\Bigr] \cdots \Bigl[ \frac \alpha{\pi_s}\Bigr],
$$
which is the analogue of the Jacobi symbol. 

We recall several formulas satisfied by the character $[\frac \cdot \cdot]$. The letter $\eta$ denotes an element of $\mathbb U$, the letter $\alpha$ denotes a Gaussian integer, the letter $\beta$ denotes an odd Gaussian integer and $\pi$ is an odd irreducible element of $\Z[i]$. We have
$$
\Bigl[ \frac \alpha \pi\Big] \equiv \alpha^{\frac {\rm N (\pi) -1}{2}}\bmod \pi,
$$
\vskip .2cm
\begin{equation}
\label{denom-eta}
\Bigl[ \frac \alpha \eta \Bigr] =1, \Bigl[ \frac \alpha{ \eta \beta} \Bigr] = \Bigl[ \frac \alpha \beta \Bigr],
\end{equation}
\vskip .2cm
\begin{equation}
\label{multiplicativity}
 \Bigl[ \frac {\alpha_1 \alpha_2}\beta \Bigr] = \Bigl[\frac{ \alpha_1} \beta \Bigr]\cdot \Bigl[\frac{ \alpha_2} \beta \Bigr],\
 \Bigl[ \frac {\alpha}{\beta_1 \beta_2} \Bigr] = \Bigl[ \frac {\alpha}{\beta_1 } \Bigr]\cdot \Bigl[ \frac {\alpha}{ \beta_2} \Bigr],
\end{equation}
\vskip .2cm
$$
\Bigl[ \frac {\alpha +\beta}\beta \Bigr] = \Bigl[ \frac {\alpha}\beta \Bigr] ,
$$
\begin{equation}
\label{-1/beta}
\Bigl[ \frac 1\beta\Bigr] = \Bigl[ \frac {-1} \beta \Bigr] =1, \ \Bigl[ \frac i \beta\Bigr] = \Bigl[ \frac {-i} \beta \Bigr] =
\begin{cases}
 1 & \text{ if } {\rm N}( \beta) \equiv 1 \bmod 8,\\
 -1 &\text{ if } {\rm N}( \beta) \equiv 5 \bmod 8,
 \end{cases}
\end{equation}
$$
\sum_{\alpha \bmod \beta} \Bigl[ \frac \alpha \beta\Bigr]
=\begin{cases} 0 & \text{ if } \beta \not= \eta \beta_1^2, \\
\phi (\beta) & \text { if } \beta = \eta \beta_1^2,
\end{cases}
$$
\begin{equation}
\label{conjugation}
\Bigl[ \frac{\overline{\alpha}}{\overline{\beta}}\Bigr] = \Bigl[ \frac \alpha{ {\beta}}\Bigr].
\end{equation}
If the real parts of $\alpha$ and $\beta $ are odd and if $\alpha$ is odd (for instance when $\alpha$ and $\beta$ are primary), we have the {\it reciprocity formula} due to Gauss (see \cite[Prop. 5.1]{Lemm-Book} for instance)
\begin{equation}
\label{reciprocity}
\Bigl[ \frac \alpha \beta\Bigr] = \Bigl[ \frac {\beta} {\alpha}\Bigr].
\end{equation}
If $n$ belongs to $\Z$ and if $\pi$ is such that $\pi\overline {\pi}=p$ belongs to $\mathcal P^{\rm odd}$, we have the equality
\begin{equation}
\label{legendre}
\Bigl[ \frac n \pi\Bigr] = \Bigl( \frac n p\Bigr),
\end{equation}
where the Legendre symbol appears on the right--hand side. If $a$ and $b$ are positive integers, with $(2a,b)=1$, we have the equality 
\begin{equation}
\label{=1always}
\Bigl[ \frac ab\Bigr] =1.
\end{equation}
\section{Oscillations of characters}
\subsection{Siegel--Walfisz type Theorems.} 
\begin{lemma}
\label{convenientSiegel} 
For every $A > 0$ one has the equality
$$
\sum_{n\leq x}\frac{ \mu^2 (nr)} {4^{\omega (n)}} \Bigl( \frac n q\Bigr) = O_A \bigr(\sqrt q \,x \,2^{\omega (r)}\, \LL^{-A}\bigl),
$$
uniformly over integers $r\geq 1$, $x \geq 2$ and odd squarefree integers $q>1$.
\end{lemma}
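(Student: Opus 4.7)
The plan is to apply the Selberg--Delange method combined with Siegel's non-effective bound on exceptional zeros of $L$--functions, after first disposing of the large-$q$ regime by a trivial estimate.

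Fix $A > 0$ and set $Q_0 := \LL^{2A+2}$. For $q \geq Q_0$, Lemma \ref{easy1} (with $\kappa = 1/4$) already gives
\[
\Bigl|\sum_{n \leq x}\frac{\mu^2(nr)}{4^{\omega(n)}}\jac{n}{q}\Bigr| \leq \sum_{n \leq x}\frac{\mu^2(n)}{4^{\omega(n)}} \ll \frac{x}{\LL^{3/4}},
\]
which is dominated by $\sqrt{q}\,x\,2^{\omega(r)}\LL^{-A}$ because $\sqrt{q} \geq \LL^{A+1}$. This is precisely where the factor $\sqrt{q}$ in the statement plays its role, and it reduces matters to the range $1 < q < Q_0$.

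For such $q$, the Jacobi symbol $\chi_q(n) := \jac{n}{q}$ is a non-principal Dirichlet character of conductor dividing $4q$, so $L(s,\chi_q)$ is entire. The generating Dirichlet series factors as
\[
F(s) := \sum_{n \geq 1}\frac{\mu^2(nr)\chi_q(n)}{4^{\omega(n)}\,n^s} = \prod_{p \nmid r}\Bigl(1 + \frac{\chi_q(p)}{4 p^s}\Bigr) = L(s,\chi_q)^{1/4}\, H(s,r,q),
\]
where the identity $1 + z/4 = (1-z)^{-1/4}(1 + O(|z|^2))$ isolates the $L$--function, and $H$ is holomorphic in $\mathrm{Re}(s) > 1/2$ with $|H(s,r,q)| \ll 2^{\omega(r)}$ uniformly (the $2^{\omega(r)}$ absorbs the Euler factors at primes dividing $r$ that are absent from $F$ but present in $L(s,\chi_q)$). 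Apply truncated Perron on $\mathrm{Re}(s) = 1 + 1/\LL$ with height $T := \exp(\sqrt{\LL})$ and shift the contour to $\sigma_0 := 1 - c_0/\log(qT)$ for a small absolute $c_0 > 0$. No residue is collected at $s = 1$. Siegel's theorem, applied with parameter $\epsilon := 1/(4(A+2))$, places any possible exceptional real zero of $L(s,\chi_q)$ to the left of $\sigma_0$ throughout the range $q < Q_0$, so $L(s,\chi_q)^{1/4}$ is holomorphic in the strip swept by the contour. The standard bound $|L(\sigma+it,\chi_q)| \ll \log(q(|t|+2))$ in the resulting zero-free region gives $|F(s)| \ll 2^{\omega(r)}\log^{1/4}(q(|t|+2))$, and the shifted-contour integral is bounded by
\[
2^{\omega(r)}\,x\,\exp\Bigl(-\frac{c_0\,\LL}{\log(qT)}\Bigr)\log^{5/4}(qT) \ll 2^{\omega(r)}\,x\,\exp(-c_1\sqrt{\LL}) \ll 2^{\omega(r)}\,x\,\LL^{-A}.
\]
The Perron error term and the two horizontal segments of the contour contribute super-polynomially small amounts, using the absolute convergence of $\sum_n \mu^2(n)/(4^{\omega(n)} n^c)$ and the same $L$-function bound.

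The main technical obstacle is the handling of the possible Siegel zero of $L(s,\chi_q)$, which is resolved by invoking Siegel's non-effective lower bound; this is the source of the ineffectivity of the implicit constant $O_A(\cdot)$. Everything else is routine Selberg--Delange contour work.
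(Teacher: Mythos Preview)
Your argument is correct and follows essentially the same Selberg--Delange strategy as the paper: factor the generating series as $H(s)\,L(s,\chi_q)^{1/4}$ with $H$ holomorphic and $O(2^{\omega(r)})$ in $\Re s>1/2$, shift the Perron contour into the classical zero-free region, and invoke Siegel's theorem for the possible exceptional zero. The one organizational difference is that you first dispose of large $q$ by the trivial bound (this is where the $\sqrt{q}$ is spent), which lets you keep a straight vertical contour because the Siegel zero then lies safely to its left; the paper instead treats all $q$ at once and, when a Siegel zero exists, deforms the contour by a small semicircle around it.
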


\begin{remark} 
In \cite[page 477]{Fou-Klu-Inv} such a sum is treated (with the constant $4$ replaced by $2$) however the proof is different: after restricting to integers $n$ with a reasonable number of prime factors, we apply the classical Siegel--Walsfisz Theorem to the largest prime factor. Such a device also appears in \cite[formula (80)]{Fou-Klu-Ann} and also in \cite[page 3631]{Fou-Klu-IMRN}.
\end{remark}

\begin{proof}
Consider the arithmetic function
$$
a(n)= a_{q,r}(n) := \frac{ \mu^2 (nr)} {4^{\omega (n)}} \Bigl( \frac n q\Bigr),
$$ 
and the associated Dirichlet series
$$
F(s) := \sum_{ n\geq 1} \frac {a (n)}{n^s}= \prod_{p\nmid r} \left( 1 + \frac {\bigl( \frac pq\bigr)}{4p^s}\right),
$$
considered as a function of the complex variable $s = \sigma +it$. This Dirichlet series is absolutely convergent for $\sigma >1$. Its expression as an Euler product leads to the formula 
$$
F(s) = G_r (s) \left\{L(s, (\cdot/q))\right\}^{1/4},
$$
where the function $G_r (s)$ is holomorphic on the half plane $\Re s >9/10$ and satisfies in this region the inequality $G_r (s) = O (2^{\omega (r)})$, and where the determination of $L(s, (\frac \cdot q))^{1/4}$ is chosen such that it tends to $1$ as $s$ is real and tends to $+ \infty$. It is well known that there exists a positive $c>0$ such that $L(s, (\frac \cdot q))$ has no zero in the region
\begin{equation}
\label{defOmega}
\Omega := \Bigl\{ s : \sigma > 1-\frac c{ \log (q (\vert t\vert +4))}\Bigr\}
\end{equation}
with at most one exception (Siegel's zero denoted by $\beta_1$), which, if it exists, is simple and located on the real axis. Furthermore, it satisfies the inequality
$$
\beta_1 < 1-\frac {c(\varepsilon)}{q^\varepsilon},
$$
where $\varepsilon >0$ is arbitrary and $c (\varepsilon ) >0$. See \cite[Theorems 11.3 \& 11.14]{Mo-Va-book} for instance.

We start from the equality
$$
\sum_{n \leq x} a(n)= \int_{2-i\infty}^{2+i \infty} F(s) x^s \frac {ds}s.
$$
If there is no Siegel zero $\beta_1$, we shift the above contour of integration to the  path $\mathcal G$ defined by the equality
$$
 \sigma=1-\frac {c/2}{ \log (q (\vert t\vert +2))},
$$
where $c$ is the constant appearing in \eqref{defOmega}. If $\beta_1$ exists, we replace the part of $\mathcal G$ satisfying 
$\vert t \vert \leq c(\varepsilon)/(2q^\varepsilon)$ by two horizontal segments with ordinates $\pm  c(\varepsilon)/(2q^\varepsilon)$
and a semi--circle with center $\beta_1$ and radius $c(\varepsilon)/(2q^\varepsilon)$. In both cases, all the zeroes of $L$ are on the left
of $\mathcal G$ and the function $F(s)$ is holomorphic on some open subset containing the part of the complex plane situated on the right--hand side of $\mathcal G$. To bound $\vert F(s)\vert $ on $\mathcal G$, we  appeal to  the following bounds  \cite[(11.6)]{Mo-Va-book}  or  \cite[(11.10)]{Mo-Va-book} for $L(s, (\frac \cdot q))$ according to the existence of $\beta_1$ and  to the situation of $s$ on $\mathcal G$ and we complete the proof of Lemma \ref{convenientSiegel}. This procedure is similar to the proof of the Siegel--Walfisz Theorem on sums of values of Dirichlet characters on consecutive primes.
\end{proof} 
 
\subsection{Double oscillations bounds for Jacobi symbols} Consider the bilinear sum over the Jacobi symbol
$$
\Omega (\boldsymbol \xi, \boldsymbol \zeta, M, N):=
\sum_{1\leq m \leq M} \sum_{1 \leq n \leq N} \mu^2 (2m) \mu^2 (2n)\xi (m) \zeta (n) \Bigl( \frac mn\Bigr),
$$
where $\boldsymbol \xi$ and $\boldsymbol \zeta$ are given sequences of complex numbers. We recall \cite[Lemma 15\,(18)]{Fou-Klu-Inv} (see also \cite[Prop. 10]{Fou-Klu-PLMS}).

\begin{lemma}
\label{billegendre}
Let $\xi (m)$ and $\zeta (n)$ be complex sequences with modulus less than $1$. Then, for every $ \varepsilon >0$, uniformly for $M$ and $N\geq 1$ we have
\begin{equation}
\label{sumsumllter}
\Omega (\boldsymbol \xi, \boldsymbol \zeta, M, N) \ll_\varepsilon MN (M^{-1/2 +\varepsilon} + N^{-1/2+\varepsilon}).
\end{equation}
\end{lemma}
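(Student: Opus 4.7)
The plan is to reduce the estimate to a single application of Cauchy--Schwarz followed by an appeal to Heath--Brown's quadratic large sieve for real characters, which is the natural tool whenever one averages a Jacobi symbol $(m/n)$ in two squarefree variables.

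First, I apply Cauchy--Schwarz to the outer sum, say in $n$, to obtain
$$
|\Omega(\boldsymbol \xi, \boldsymbol \zeta, M, N)|^2 \leq \Bigl(\sum_{n \leq N} \mu^2(2n) |\zeta(n)|^2\Bigr) \sum_{n \leq N} \mu^2(2n) \Bigl|\sum_{m \leq M} \mu^2(2m) \xi(m) \Bigl(\frac{m}{n}\Bigr)\Bigr|^2,
$$
where the first factor is trivially $\leq N$. The second factor is an $L^2$-mean over odd squarefree $n$ of character sums against the real characters $(\,\cdot\,/n)$, which is exactly the setting of Heath--Brown's large sieve for quadratic characters (\emph{A mean value estimate for real character sums}, Acta Arithmetica \textbf{72} (1995)). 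Its dual formulation asserts that for every $\varepsilon > 0$ and coefficients $(a_m)$ supported on odd squarefree $m \leq M$,
$$
\sum_{\substack{n \leq N \\ \mu^2(2n) = 1}} \Bigl|\sum_{m} a_m \Bigl(\frac{m}{n}\Bigr)\Bigr|^2 \ll_\varepsilon (MN)^\varepsilon (M + N) \sum_m |a_m|^2.
$$
Taking $a_m = \mu^2(2m) \xi(m)$, so that $\sum_m |a_m|^2 \leq M$, bounds the second factor by $\ll_\varepsilon (MN)^\varepsilon (M + N) M$.

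Combining the two factors yields $|\Omega|^2 \ll_\varepsilon (MN)^{1+\varepsilon}(M + N)$, and since $(MN)^{1/2}(M + N)^{1/2} \leq MN^{1/2} + M^{1/2}N = MN(M^{-1/2} + N^{-1/2})$, taking square roots gives the claimed bound after renaming $\varepsilon$.

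The only genuinely non-trivial input is the quadratic large sieve itself, which is the ``hard'' step; the rest is routine. If one wished to avoid this deep result, an elementary alternative is to split $m$ and $n$ into residue classes modulo $4$ so that Gauss reciprocity contributes a constant sign in each class (thereby making the estimate symmetric in $m \leftrightarrow n$), apply Cauchy--Schwarz in whichever variable is smaller, parametrise pairs $(n, n')$ with $d = \gcd(n, n')$ so that $nn' = d^2 n_1 n_1'$ with $n_1 n_1'$ squarefree, and finally bound the off-diagonal character sums $\sum_{m \leq M}(m / n_1 n_1')$ by P\'olya--Vinogradov when $n \neq n'$. After summing the off-diagonal contributions this recovers essentially the same estimate, though the large-sieve route is considerably cleaner.
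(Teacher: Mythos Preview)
Your argument is correct. Note, however, that the paper does not actually prove this lemma: it simply quotes it from Fouvry--Kl\"uners \cite[Lemma~15, (18)]{Fou-Klu-Inv} (and \cite[Prop.~10]{Fou-Klu-PLMS}). So there is no in-paper proof to compare against, only the cited references.

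Your route via Cauchy--Schwarz plus Heath--Brown's quadratic large sieve is the standard modern shortcut and works as written. One cosmetic point: what you literally obtain is
\[
\Omega \ll_\varepsilon (MN)^{\varepsilon}\, MN\bigl(M^{-1/2}+N^{-1/2}\bigr),
\]
which differs from the stated form $MN(M^{-1/2+\varepsilon}+N^{-1/2+\varepsilon})$ by an extra factor $N^{\varepsilon}$ (resp.\ $M^{\varepsilon}$) in the regime where one variable is vastly larger than the other. For every application in this paper (both $M,N\le x$, savings measured in powers of $\log x$) the two forms are interchangeable, so this is harmless; but if you want the exact statement as written, the elementary alternative you sketch at the end---reciprocity to symmetrise, Cauchy--Schwarz in the shorter variable, then P\'olya--Vinogradov on the off-diagonal---delivers it, and is in fact essentially the argument in the original Fouvry--Kl\"uners reference. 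The Heath--Brown route is cleaner but slightly overshoots on the $\varepsilon$; the elementary route is longer but lands exactly on the stated exponent.
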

This quite general lemma shows cancellation as soon as $\min (M, N)$ tends to infinity. Actually, we will use Lemma \ref{billegendre} under an extended form, where the number of divisors of the integer $n$ is denoted by $d(n)$.
\begin{lemma}
\label{billegendreextended}
Let $\xi (m)$ and $\zeta (n)$ be complex sequences, such that $\vert \xi(m) \vert \leq d(m)$ and $\vert \zeta(n)\vert \leq 1$ for all $m$ and $n \geq 1$. Then, for every $ \varepsilon >0$, uniformly for $K$, $M$ and $N\geq 1$ we have the inequality
\begin{equation*}
\Omega (\boldsymbol \xi, \boldsymbol \zeta, M, N) \ll_\varepsilon KMN (M^{-1/2 +\varepsilon } + N^{-1/2+\varepsilon}) + K^{-1} MN (\log M)^3.
\end{equation*}
\end{lemma}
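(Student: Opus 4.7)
The plan is to reduce to Lemma \ref{billegendre} by introducing $K$ as a truncation threshold on the divisor function $d(m)$. Specifically, I would split the outer sum according to whether $d(m) \leq K$ or $d(m) > K$, and handle the two pieces by entirely different methods: the ``small divisor'' piece falls within the $L^\infty$-hypothesis of Lemma \ref{billegendre} after rescaling, while the ``large divisor'' piece is controlled trivially using a second-moment estimate for $d$.

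For the first piece, I would set $\widetilde{\xi}(m) := \xi(m)/K$ when $d(m) \leq K$ and $\widetilde{\xi}(m) := 0$ otherwise. Since $|\xi(m)| \leq d(m) \leq K$ on this range, the rescaled sequence satisfies $|\widetilde{\xi}(m)| \leq 1$, so Lemma \ref{billegendre} applies to $\Omega(\widetilde{\boldsymbol{\xi}}, \boldsymbol{\zeta}, M, N)$ and multiplying back by $K$ yields
$$
K \cdot \Omega(\widetilde{\boldsymbol{\xi}}, \boldsymbol{\zeta}, M, N) \ll_\varepsilon K M N \bigl(M^{-1/2+\varepsilon} + N^{-1/2+\varepsilon}\bigr),
$$
which is precisely the first term in the claimed bound. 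For the complementary piece, where $d(m) > K$, I would bound $\bigl|\mu^2(2m)\mu^2(2n)\xi(m)\zeta(n)(m/n)\bigr|$ trivially by $d(m)$ and reduce to
$$
N \sum_{\substack{m \leq M \\ d(m) > K}} d(m) \leq \frac{N}{K} \sum_{m \leq M} d(m)^2 \ll \frac{M N (\log M)^3}{K},
$$
where the middle step is the elementary Chebyshev inequality (each $m$ in the range contributes at least $K \cdot d(m)$ to $\sum d(m)^2$), and the final step is the classical second-moment bound for the divisor function, coming from $\sum_m d(m)^2 m^{-s} = \zeta(s)^4/\zeta(2s)$. Adding the two contributions produces the stated inequality.

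There is honestly no serious obstacle here: the argument is a standard divisor-trick splitting, and the free parameter $K$ is simply left in place so that the user of the lemma can optimize it later (typically as a small power of $\LL$) against the other constraints. The only subtle point worth noting is the asymmetry of the trivial tail bound: it uses only $|\zeta(n)| \leq 1$ on the $n$-side, so all of the divisor data is absorbed by the $m$-variable, matching exactly the asymmetric hypotheses in the lemma.
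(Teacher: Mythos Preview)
Your argument is correct and essentially identical to the paper's: both split the sum according to a threshold $K$ (the paper cuts on $|\xi(m)|\le K$, you cut on $d(m)\le K$, which is harmless since $|\xi(m)|\le d(m)$), apply Lemma~\ref{billegendre} after rescaling by $K$ on the bounded part, and control the tail via $\sum_{d(m)>K} d(m) \le K^{-1}\sum d(m)^2 \ll K^{-1} M(\log M)^3$.
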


\begin{proof} 
Of course, we could go to the original proof of Lemma \ref{billegendre} and insert, for some integer $r$, the $\ell_r$--norm of the sequence $\xi (m)$. We prefer to give a proof starting from Lemma \ref{billegendre} itself. We denote by $\Omega_{<K}$ the subsum of $\Omega (\boldsymbol \xi, \boldsymbol \zeta, M, N)$ corresponding to pairs $(m,n)$ such that $\vert \xi (m) \vert \leq K$ and $\Omega_{\geq K}$ is the complementary sum. So we have the equality $\Omega (\boldsymbol \xi, \boldsymbol \zeta, M, N) = \Omega_{<K} + \Omega_{\geq K}$. A direct application of \eqref{sumsumllter} gives the bound
$$
\Omega_{<K} \ll KMN (M^{-1/2 +\varepsilon } + N^{-1/2+\varepsilon}).
$$
The other sum $\Omega_{\geq K}$ is handled trivially by
$$
\vert \Omega_{\geq K} \vert \leq N \sum_{m\leq M \atop d(m) \geq K } d(m)\leq N \sum_{m\leq M} \frac{ d(m)^{2}}{K}\ll 
K^{-1}MN(\log M)^3.
$$
Adding these bounds completes the proof of the lemma.
\end{proof}
 
\subsection{Double oscillations bounds for $[\frac \cdot \cdot]$--symbols} We now consider the situation where, in the bilinear form, the Jacobi symbol is replaced by the $[\frac \cdot \cdot ]$--symbol, which turns out to be very similar.

To be more precise, let us define the bilinear form
$$
\Xi (\boldsymbol \xi, \boldsymbol \zeta, A,B) := \sum_{{\rm N} (\alpha) \leq A} \sum_{{\rm N} (\beta) \leq B} \xi (\alpha) \zeta (\beta) \Bigl[ \frac \alpha \beta\Bigr],
$$
where $\xi (\alpha)$ and $\zeta (\beta)$ are complex numbers defined on the set of odd Gaussian integers $\alpha$ and $\beta$. By a weaker form of \cite[Proposition 9]{Fou-Klu-Ann} we have
\begin{lemma} 
\label{boundsforbili2}
Let $\xi (\alpha)$ and $\zeta (\beta)$ be complex sequences
with support included in the set of primary squarefree Gaussian integers. Furthermore, suppose that these sequences satisfy the inequalities
\begin{equation*}
\vert \xi (\alpha)\vert, \ \vert \zeta (\beta)\vert \leq 1.
\end{equation*}
Then, uniformly for $A$ and $B\geq 1$, we have
$$
\Xi (\boldsymbol \xi, \boldsymbol \beta, A, B) \ll AB (A^{-1/9} +B^{-1/9}).
$$
\end{lemma}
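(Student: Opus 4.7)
The plan is to mimic the Cauchy--Schwarz argument behind Lemma \ref{billegendre}, with the $[\cdot/\cdot]$-symbol of $\Q(i)$ replacing the Jacobi symbol of $\Q$. First, using the reciprocity formula \eqref{reciprocity} one has $[\alpha/\beta] = [\beta/\alpha]$ for primary squarefree Gaussian integers, so $\Xi(\boldsymbol\xi,\boldsymbol\zeta,A,B)$ equals $\Xi(\boldsymbol\zeta,\boldsymbol\xi,B,A)$ up to the harmless contribution from unit twists \eqref{denom-eta}; it therefore suffices to prove the one-sided bound $\Xi \ll AB \cdot A^{-1/9}$ under the assumption $A \leq B$, since the companion bound $\Xi \ll AB \cdot B^{-1/9}$ then follows by swapping the roles of $\boldsymbol\xi$ and $\boldsymbol\zeta$.

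Applying Cauchy--Schwarz to pull the $\beta$-sum outside the squared $\alpha$-sum, expanding the square, and swapping the order of summation gives
\[
|\Xi|^2 \leq B \sum_{\alpha_1,\alpha_2}\xi(\alpha_1)\overline{\xi(\alpha_2)}\sum_{{\rm N}(\beta)\leq B}\Bigl[\frac{\alpha_1\alpha_2}{\beta}\Bigr].
\]
Since $\alpha_1$ and $\alpha_2$ are primary and squarefree, the product $\alpha_1\alpha_2$ is a unit times a square precisely when $\alpha_1=\alpha_2$, and the resulting diagonal contribution of order $AB^2$ translates to $\sqrt{A}\,B \ll A^{8/9}B$ in the bound for $|\Xi|$. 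For the off-diagonal terms, $\beta \mapsto [\alpha_1\alpha_2/\beta]$ is a non-trivial character of modulus dividing $\alpha_1\alpha_2$, whose norm is at most $A^2$. I would then invoke a Burgess-type estimate on $\Z[i]$ with parameter $r=3$, bounding the inner character sum by $B^{2/3}\,{\rm N}(\alpha_1\alpha_2)^{1/9+\varepsilon}$; summing over the $O(A^2)$ pairs, multiplying by the outer factor $B$, and taking square roots produces a contribution of order $A^{10/9+\varepsilon}B^{5/6}$, which is dominated by $A^{8/9}B$ whenever $B \gg A^{4/3}$.

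The main obstacle is the intermediate regime where $A$ and $B$ are of comparable size, in which neither direction of Cauchy--Schwarz suffices on its own. My plan there is to dyadically decompose the supports of $\boldsymbol\xi$ and $\boldsymbol\zeta$ and interpolate between the two symmetric Cauchy--Schwarz bounds (one in $\alpha$, one in $\beta$, interchanged via reciprocity); alternatively, and more robustly, one can follow the strategy of \cite{Fou-Klu-Ann} and reduce the bilinear $[\cdot/\cdot]$-form on $\Z[i]$ to the Jacobi bilinear form on $\Z$ by Poisson summation in the real and imaginary parts, so that the $\min(M,N)^{-1/2+\varepsilon}$ saving of Lemma \ref{billegendre} yields, after the inevitable losses from the \emph{slicing}, the exponent $1/9$ uniformly in the ranges of $A$ and $B$.
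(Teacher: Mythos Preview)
The paper does not prove this lemma; it simply quotes it as a weakened form of \cite[Proposition~9]{Fou-Klu-Ann}. Your proposal is therefore an attempt to reprove that black box, and the attempt has a real gap that you correctly identify but do not close.

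The Cauchy--Schwarz-plus-Burgess step, even granting a Burgess bound over $\Z[i]$ with the stated exponents (which is itself non-trivial and would need a reference), only yields the off-diagonal estimate $A^{10/9+\varepsilon}B^{5/6}$, valid against the target $A^{8/9}B$ exactly when $B\gg A^{4/3+\varepsilon}$. By the reciprocity symmetry the companion estimate covers $A\gg B^{4/3+\varepsilon}$. The strip $B^{3/4}\ll A\ll B^{4/3}$ is left open, and your first proposed fix---``dyadically decompose and interpolate between the two symmetric Cauchy--Schwarz bounds''---cannot work: the two ranges of validity are disjoint, so there is nothing to interpolate. Concretely, at the worst point $A=B$ both off-diagonal bounds equal $A^{35/18+\varepsilon}$, whereas the target is $A\cdot A^{8/9}=A^{34/18}$; you are short by $A^{1/18}$, and no convex combination of two equal quantities improves either. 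Varying the Burgess parameter $r$ does not help for the same reason: as $r$ grows the saving in $B$ vanishes while the loss in the conductor persists.

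Your second alternative---reduce the $[\cdot/\cdot]$-bilinear form over $\Z[i]$ to the Jacobi bilinear form over $\Z$---is essentially the content of the cited result in \cite{Fou-Klu-Ann}, but that reduction is the entire argument, not a postscript: one has to slice the Gaussian integers by real and imaginary parts, control the resulting congruence conditions, and track the exponent loss carefully, which is precisely why the lemma records $1/9$ rather than the $1/2-\varepsilon$ one gets over~$\Z$. As written, the proposal is a correct sketch of the easy ranges together with a pointer to the hard range; to count as a proof you must either carry out the reduction in \cite{Fou-Klu-Ann} or replace the single Cauchy--Schwarz step by a large-sieve inequality for quadratic characters on $\Z[i]$ (in the spirit of Heath--Brown's mean-value theorem), which is symmetric in the two variables and therefore does not leave a middle gap.
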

The trivial bound for $\Xi$ is $O(AB)$. Any bound of $\Xi$ of the shape $\Xi \ll AB (A^{-\delta} + B^{-\delta})$ for some positive $\delta$
would be sufficient for the proof of Theorem \ref{central}. The same remark applies to \eqref{sumsumllter}.
 
\section{Proof of Theorem \ref{central}. First steps}
\subsection{Transformation of $f(n)$} \label{transformation} 
Our purpose is to use the character $[\cdot/\cdot]$ to transform the function $f (n)$ when $n$ is a positive squarefree integer. Recall the definition of $f(n)$, see \eqref{deff1}
\begin{multline*}
f(n) := \frac 14 \ \cdot \sharp \Bigl\{\beta \in \mathbb Z [i] :\ \beta \equiv \pm 1 \bmod 4\Z [i], \ \beta \vert n\text{ such that } \\ \text{ for all } \pi \vert \beta \text{ the Gaussian integer } n/\beta \text { is a square modulo } \pi \\
\text{ and for all } \pi \vert (n/\beta) \text{ the Gaussian integer } \beta \text { is a square modulo } \pi
\Bigr\}.
\end{multline*}
We further recall that $f(n)$ is equal to $2^{\rk_4 \text{Cl}(K_n)}$ for generic $n$ by Theorem \ref{tAlgebra}. First of all, the value of $f(n)$ does not change if, in the definition \eqref{deff1}, we restrict ourselves to primes $\pi $ belonging to
$\mathcal P^{\rm G}$. We detect the condition {\it for every $\pi \vert \beta, \text{ we have } \Bigl[ \frac {n/\beta}\pi\Bigr]=1$} by
\begin{equation}
\label{detect1}
\frac 1{2^{\widetilde \omega (\beta)}} \prod_{\pi \mid \beta\atop \pi \in \mathcal P^{\rm G}} \Bigl(1 + \Bigl[ \frac {n/\beta}\pi\Bigr]\Bigr) = \frac 1{2^{\widetilde \omega (\beta)}} \sum_{\beta_1 \mid \beta\atop \beta_1 \text{ primary } }
\Bigl[ \frac {n/\beta}{\beta_1}\Bigr],
\end{equation}
the value of which is $1$ or $0$. 

Similarly, we detect the condition {\it for every $\pi \vert n/\beta, \text{ we have } \Bigl[ \frac {\beta}\pi\Bigr]=1 $} by 
\begin{equation}
\label{detect2}
\frac 1{2^{\widetilde \omega (n/\beta)}} \prod_{\pi \mid n/\beta\atop \pi \in \mathcal P^{\rm G}} \Bigl(1 + \Bigl[ \frac {\beta}\pi\Bigr]\Bigr) = \frac 1{2^{\widetilde \omega (n/\beta)}} \sum_{\beta_3 \mid n/\beta\atop \beta_3 \text{ primary }}
\Bigl[ \frac {\beta}{\beta_3} \Bigr].
\end{equation}
Writing $\beta =\beta_0 \beta_1$ and $n/\beta = \beta_2 \beta_3$, gathering \eqref{detect1} and \eqref{detect2} and expanding the sums and the characters we finally obtain the equality
\begin{equation}
\label{f(n)=}
f(n)
=\frac 14 \sum_{\beta_0} \frac 1{2^{\widetilde \omega (\beta_0)}} \sum_{\beta_1} \frac 1{2^{\widetilde \omega (\beta_1)}} \sum_{\beta_2} \frac 1{2^{\widetilde \omega (\beta_2)}} \sum_{\beta_3} \frac 1{2^{\widetilde \omega (\beta_3)}} \Bigl[ \frac {\beta_0\beta_1}{\beta_3} \Bigr] \cdot 
\Bigl[ \frac {\beta_2\beta_3}{\beta_1}\Bigr] 
\end{equation}
where the sum is over $\boldsymbol \beta = (\beta_0, \beta_1, \beta_2, \beta_3)\in \Z[i]^4$ such that
\begin{equation}
\label{split}
n= \beta_0 \beta_1 \beta_2 \beta_3, \, \beta_0 \beta_1 \equiv \pm 1 \bmod 4, \, \beta_1 \text{ and }\beta_3 \text{ primary. }
\end{equation}
These congruence conditions imply that $\beta_1$ and $\beta_3$ both have odd real parts. Hence, by the reciprocity relation \eqref{reciprocity}, the equality \eqref{f(n)=} simplifies into
\begin{equation*}
f(n)
=\frac 14 \sum_{\beta_0} \frac 1{2^{\widetilde \omega (\beta_0)} }\sum_{\beta_1} \frac 1{2^{\widetilde \omega (\beta_1)}} \sum_{\beta_2} \frac 1
{2^{\widetilde \omega (\beta_2)}} \sum_{\beta_3} \frac 1{2^{ {\widetilde \omega (\beta_3)}}} \Bigl[ \frac {\beta_0}{\beta_3} \Bigr] \cdot 
\Bigl[ \frac {\beta_2}{\beta_1}\Bigr], 
\end{equation*}
where the $\beta_i$ satisfy \eqref{split}. Let 
$$
S(x):= \sum_{n\leq x} \mu^2 (2n) \Bigl( \frac {f(n)} { 2^{\omega_3 (n) -1}}\Bigr)
$$
be the sum appearing in \eqref{importantsum}. Inserting the factorization of the variable $n$ given in \eqref{split} we obtain the equality
\begin{multline}
\label{S(x)=2}
S (x) =\frac 12 \sum_{\beta_0} \frac 1{2^{\widetilde \omega (\beta_0)+\omega_3 (\beta_0)}} \sum_{\beta_1} \frac 1{2^{\widetilde \omega (\beta_1)+\omega_3 (\beta_1)} }\sum_{\beta_2} \frac 1{2^{\widetilde \omega (\beta_2)+\omega_3 (\beta_2)}} \\ \sum_{\beta_3} \frac 1{2^{\widetilde \omega (\beta_3) +\omega_3 (\beta_3)} }\Bigl[ \frac {\beta_0}{\beta_3} \Bigr] \cdot 
\Bigl[ \frac {\beta_2}{\beta_1}\Bigr],
\end{multline}
where the Gaussian integers $\beta_i$ are odd and satisfy the congruence conditions
\begin{equation} \label{congruences}
\beta_0 \beta_1 \equiv \pm 1 \bmod 4, \, \beta_1\text{ and } \beta_3 \text{ primary},
\end{equation}
the constraint
$$
\beta_0 \beta_1 \beta_2 \beta_3 \in \N \text{ and } 1\leq \beta_0 \beta_1 \beta_2 \beta_3 \leq x,
$$
and the coprimality condition
$$
(\beta_k, \beta_\ell) =1 \text{ for } 0\leq k < \ell \leq 3.
$$
In \eqref{S(x)=2} the function $\omega_3$ has naturally been extended to Gaussian integers $z$ by defining $\omega_3 (z)$ to be the number of irreducible divisors of $z$ belonging to $\mathcal P^{\rm odd}$.
\subsection{The main term.} Let $S^{\rm MT}(x)$ be the contribution to the right--hand side of \eqref{S(x)=2} coming from the $\boldsymbol \beta = (\beta_0, \beta_1, \beta_2, \beta_3)$ such that every $\beta_i$ is a non zero integer, of any sign. When the $\beta_i$ are odd integers, condition \eqref{congruences} simply becomes
\begin{equation}
\label{congsimpli}
\beta_1\equiv \beta_3 \equiv 1 \bmod 4.
\end{equation}
When $m$ is a non zero integer we have $\widetilde \omega (m) +\omega_3(m) = 2 \omega (m)$. Then we deduce the equality 
$$
S^{\rm MT} (x) = \frac 12 \sum_{1\leq n \leq x} \frac{\mu^2 (2n)}{4^{\omega (n)} }\cdot \nu (n),
$$
where $\nu (n)$ is the number of ways that $n$ can be written as $n = \beta_0 \beta_1 \beta _2 \beta_3$ with integers $\beta_i$ of any sign satisfying \eqref{congsimpli}. When $n$ is odd and squarefree, a direct computation shows the equality
$$
\nu (n) =2 \cdot 4^{\omega (n)}.
$$
Therefore we conclude that
\begin{equation}
\label{SMT=}
S^{\rm MT}(x) = \sum_{1\leq n \leq x} \mu^2 (2n),
\end{equation}
which corresponds to the first term on the right--hand side of \eqref{importantsum}.

\section{Preparation of the error term. Part I} 
Let $S^{\rm Err}(x) $ be the contribution to $S(x)$ of the terms $\boldsymbol \beta$ such that at least one $\beta_k$ (and hence at least two) is not an integer. Our goal is to prove that
\begin{equation}
\label{SErr=}
S^{\rm Err}(x) = O(x(\log x)^{-1/8}),
\end{equation}
which combined with \eqref{SMT=} will give the equality \eqref{importantsum} and hence Theorem \ref{central}.

\subsection{Factorization of the variables}
We appeal to Lemma \ref{decomposition} to factorize each Gaussian integer $\beta_k$ in \eqref{S(x)=2}. The summation over the four variables $\beta_k$ is replaced by twenty variables $\eta_k$, $b_k$, $z_{k\ell}$. We take time to precisely write this expression, where we exchanged the indices $1$ and $3$ in comparison with \eqref{S(x)=2}. We have
\begin{multline}
\label{S(x)=3} 
S^{\rm Err}(x)\\ = \frac 12 \sum_{\boldsymbol \eta} \sum_{\boldsymbol b} \frac 1{4^{\omega (\Pi \boldsymbol b)}} \sum_{\boldsymbol z}
 \frac {\mu^2 \bigl(2( \Pi \boldsymbol b) \,(\Pi \boldsymbol z)\bigr)} {2^{\widetilde \omega (\Pi \boldsymbol z)}} \Bigl[ \frac{\eta_0b_0 z_{01} z_{02}z_{03}}{\eta_1b_1z_{10} z_{12}z_{13}}\Bigr] \cdot \Bigl[ \frac{\eta_2b_2z_{20}z_{21} z_{23}}{\eta_3b_3 z_{30} z_{31} z_{32}} \Bigr],
\end{multline}
where $\Pi \boldsymbol b := b_0 b_1b_2 b_3$, $\Pi \boldsymbol z= \prod_{k\not= \ell } z_{k \ell}=\prod_{0\leq k < \ell\leq 3} \vert z_{k\ell}\vert^2$ and
\begin{itemize}
\item we have
\begin{equation}
\label{1<BZ<x}
1\leq (\Pi \boldsymbol b)( \Pi \boldsymbol z) \leq x,
\end{equation}

\item $\boldsymbol \eta = (\eta_0, \eta_1, \eta_2, \eta_3)$ belongs to $ \mathbb U^4$ and satisfies the equality
\begin{equation}
\label{prodeta}
\eta_0 \eta_1 \eta_2\eta_3 =1,
\end{equation}

\item $\boldsymbol b = (b_0,b_1, b_1, b_3)$ is a four--tuple of odd positive integers,

\item $\boldsymbol z = (z_{k \ell})_{0\leq k\not= \ell \leq 3}$ are primitive primary Gaussian integers, such that 
\begin{equation}
\label{conjugate} 
z_{k \ell} = \overline{ z_{\ell k}} \text{ for } 0\leq k < \ell \leq 3, 
\end{equation}

\item we have
\begin{equation}
\label{cong1}
\eta_0 \eta_3 b_0 b_3z_{01}z_{02} z_{31} z_{32} \vert z_{03}\vert^2\equiv \pm 1 \bmod 4, \ \eta_3 b_3\text{ and } \eta_1 b_1 \text{ are primary,}
\end{equation}

\item for some $0\leq k\leq 3$, we have 
\begin{equation}
\label{not=1}
\eta_k b_k\prod_{\ell \not= k} z_{k\ell} \not\in \Z.
\end{equation}
\end{itemize}

\subsection{Comments and simplifications of the formula \eqref{S(x)=3}.} Note that the factor $\mu^2 (2 (\Pi \boldsymbol b)( \Pi \boldsymbol z) )$ in the definition of $S^{\rm Err} (x)$ ensures that all the $b_k$ and all the $z_{k\ell}$ are odd and coprime by pairs. The integer $\Pi \boldsymbol z$ is only divisible by odd natural primes congruent to $1\bmod 4$ and and this remark leads to the equality 
\begin{equation}
\label{omega=omega}
\widetilde \omega (\Pi \boldsymbol z) = 2 \omega_1 (\Pi \boldsymbol z) =2 \omega (\Pi \boldsymbol z).
\end{equation}
 
Now consider the second part of \eqref{cong1}. Since $b_1$ and $b_3$ are positive integers the units $\eta_1$ and $\eta_3$ can only be equal to $\pm 1$. Hence the conditions $\eta_1 b_1$ and $\eta_3 b_3$ primary are equivalent to
\begin{equation}
\label{b1=eta1}
b_1 \equiv \eta_1 \text{ and } b_3 \equiv \eta_3 \bmod 4.
\end{equation}
Consider now the first part of \eqref{cong1}. Since $\vert z_{03}\vert^2$ is a positive integer $\equiv 1 \bmod 4$, since
$b_0$ and $b_3 $ are $\equiv \pm 1 \bmod 4$, since $\eta_3 =\pm 1$ and since the $z_{k\ell}$ are primary, we deduce that $\eta_0 \equiv \pm 1 \bmod 2(1+i)$ so we have $\eta_0= \pm 1$. Returning to \eqref{prodeta}, we deduce that $\eta_2= \pm 1$. So we have that 
\begin{equation}
\label{condforeta}
\boldsymbol \eta \in \{\pm1\}^4, \text{ and } \eta_0\eta_1 \eta_2 \eta_3 =1.
\end{equation}
With the above remarks, we see that the first part of \eqref{cong1} is equivalent to
\begin{equation}
\label{cong17}
z_{01}z_{02}z_{31}z_{32} \equiv \pm 1\bmod 4.
\end{equation}
Since the value of every $\eta_k$ is $\pm 1$, we see that \eqref{not=1} is equivalent to
\begin{equation}
\label{not=1ter}
\text{ for some } 0\leq k < \ell \leq 3 \text{ we have } z_{k\ell}\not= 1.
\end{equation}
That \eqref{not=1} implies \eqref{not=1ter} is clear. For the other direction suppose, for instance, that $b_0 z_{01}z_{02}z_{03} =b'$ where $b'$ is some integer and suppose that the primitive primary element $z_{01}$ is not equal to $1$. Then $z_{01}$ is divisible by some irreducible $\pi$, with $\pi \overline \pi $ an element of $\mathcal P^{\rm odd}$ congruent to $1$ modulo $4$. Necessarily $\overline \pi$ divides the integer $b'$ and hence $\overline \pi$ divides $b_0$, $z_{02}$ or $z_{03}$. But $\overline \pi$ does not divide the integer $b_0$ (otherwise $b_0$ and $z_{01}$ would not be coprime). So $\overline \pi$ divides $z_{02}$ for instance. But, by conjugation, $\overline \pi $ divides $\overline{z_{01}} = z_{10}$. So $z_{10}$ and $z_{02}$ would not be coprime and this is a contradiction.
 
Finally, by the values of the symbol $[\frac \cdot \cdot]$ given in \eqref{denom-eta} and \eqref{-1/beta} we can suppress the $\eta_k= \pm1 $ in the numerators and denominators of both symbols $[\frac \cdot \cdot ]$ in \eqref{S(x)=3}.
 
We benefit from all these remarks to simplify the formula \eqref{S(x)=3}. So we introduce the set $\mathcal U \subset (\Z[i]/4 \Z [i])^4$ defined by
$$
\mathcal U :=\bigl \{ (u_{01}, u_{02}, u_{13}, u_{23} ): u_{01}\,u_{02}\, \overline{u_{13}} \,\overline {u_{23}} \equiv \pm 1\bmod 4\bigr\}.
$$
After a decomposition of \eqref{cong17} into congruences modulo $4$ and a trivial summation over $\boldsymbol \eta$ and the $\boldsymbol b$ satisfying \eqref{b1=eta1} and \eqref{condforeta}, we split $S^{\rm Err} (x)$ into
\begin{equation*}
S^{\rm Err} (x) =\sum_{\boldsymbol u \in \mathcal U} S(x, \boldsymbol u),
\end{equation*}
with 
\begin{equation}
\label{S(x)=5} 
S(x , \boldsymbol u) = \sum_{\boldsymbol b} \frac 1{4^{\omega (\Pi \boldsymbol b)}} \sum_{\boldsymbol z}
\frac {\mu^2 \bigl(2( \Pi \boldsymbol b) \,(\Pi \boldsymbol z)\bigr)} {2^{\widetilde \omega (\Pi \boldsymbol z)}} \Bigl[ \frac{b_0 z_{01} z_{02}z_{03}}{b_1z_{10} z_{12}z_{13}}\Bigr] \cdot \Bigl[ \frac{b_2z_{20}z_{21} z_{23}}{b_3 z_{30} z_{31} z_{32}} \Bigr],
\end{equation}
where $\boldsymbol b$ and $\boldsymbol z$ satisfy \eqref{conjugate} and \eqref{1<BZ<x}, the condition \eqref{not=1ter} and the congruence conditions
\begin{equation}
\label{newcongcondition}
z_{01}\equiv u_{01}, z_{02}\equiv u_{02}, z_{13} \equiv u_{13}, z_{23} \equiv u_{23} \bmod 4.
\end{equation}
 
The sum $S (x, \boldsymbol u)$ contains ten independent variables of summation:
\begin{equation}
\label{variables}
b_0, b_1, b_2, b_3 \in \N \text{ and } z_{01}, z_{02}, z_{03}, z_{12}, z_{13}, z_{23} \in \Z[i],
\end{equation}
since the other $z_{k\ell}$ are linked to the other by \eqref{conjugate}. These variables do not have the same role: each variable $b_k$ appears exactly in one of the two symbols $[\frac \cdot \cdot]$, and thanks to \eqref{reciprocity} they are similar. The variable $z_{k\ell}$ and its conjugate $z_{\ell k}= \overline{z_{k \ell}}$ appear exactly once. But $z_{01}$ and $z_{10}$ appear in the numerator and in the denominator of the same symbol. The same remark is true for $z_{23}$ and $z_{32}$. The other $z_{k\ell}$ and $z_{\ell k}$ appear in different symbols. In its combinatorial aspect, this situation appears to be different from the one encountered in \cite{Fou-Klu-Inv} for instance.
 
\subsection{Trivial bounds for some subsums of $S (x, \boldsymbol u)$.} 
We first give a trivial bound for the complete sum $S (x, \boldsymbol u)$. Consider the equation \eqref{S(x)=5}. Since every $p\equiv 1 \bmod 4$ can be written in $12$ ways as
\begin{equation*}
p = \prod_{0\leq k \not= \ell \leq 3} z_{k\ell},
\end{equation*}
where the primitive primary Gaussian integers $z_{k\ell}$ satisfy the conjugacy condition \eqref{conjugate}, we deduce the following trivial inequality for $S (x, \boldsymbol u)$, where we bounded each character by $1$ and where we dropped the conditions \eqref{newcongcondition}: 
\begin{equation}
\label{crude1}
\vert S (x, \boldsymbol u) \vert \leq \sum_{\boldsymbol b}\frac 1{4^{\omega (\Pi \boldsymbol b)}}
\sum_{m\atop p \mid m \Rightarrow p \equiv 1 \bmod 4} \mu^2 (2 (\Pi \boldsymbol b) m)\cdot \frac{ 12^{\omega (m)}}{4^{\omega (m)}}.
\end{equation}
Here we used \eqref{omega=omega} and the sum is over the positive integers $\boldsymbol b =( b_0,b_1,b_1,b_3)$ and $m$ such that $(\Pi \boldsymbol b) m \leq x$. A direct application of Lemma \ref{easy1} implies the bound
\begin{align*} 
\vert S (x, \boldsymbol u) \vert &\ll\sum_{b\leq x}\mu^2 (2b) \frac {4^{\omega (b)}}{4^{\omega (b)}}
(x/b) \LL^{1/2} \nonumber\\
& \ll x \LL^{3/2}.
\end{align*}

As a consequence of \eqref{summu2}, we see that this crude bound of the error term is larger than $S^{\rm MT} (x)$ by a small power of $\LL$. 

We want to generalize this bound to some important subsums we will meet in the sequel of the proof. Let $\mathcal R$ be a set of positive integers less than $x$. Let $S_{\mathcal R} (x, \boldsymbol u)$ be the subsum of $S (x, \boldsymbol u)$ corresponding to the further restriction on the variables
$$
(\Pi \boldsymbol b) (\Pi \boldsymbol z) \in \mathcal R.
$$
We have
\begin{lemma} 
\label{15/4}
Uniformly for $x \geq 1$ and for  $\mathcal  R $ a subset of integers less than $x$, we have 
$$
S_{\mathcal R} (x, \boldsymbol u) \ll
(x \vert \mathcal R \vert)^{1/2} \LL^{15/4}.
$$
\end{lemma}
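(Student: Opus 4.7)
The plan is to estimate $S_{\mathcal R}(x, \boldsymbol u)$ by Cauchy--Schwarz after grouping the terms in \eqref{S(x)=5} by the value of $n := (\Pi \boldsymbol b)(\Pi \boldsymbol z)$. Writing
\[
S_{\mathcal R}(x, \boldsymbol u) = \sum_{\substack{n \in \mathcal R \\ n \leq x}} c(n, \boldsymbol u),
\]
where $c(n, \boldsymbol u)$ collects the inner sum over all decompositions of $n$ contributing to \eqref{S(x)=5}, the triangle inequality followed by Cauchy--Schwarz gives
\[
|S_{\mathcal R}(x, \boldsymbol u)| \leq |\mathcal R|^{1/2} \Bigl(\sum_{n \leq x} \mu^2(2n)\, |c(n, \boldsymbol u)|^2\Bigr)^{1/2}.
\]

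The next step is to bound $|c(n, \boldsymbol u)|$ for each odd squarefree $n \leq x$. By \eqref{omega=omega}, every term in $c(n, \boldsymbol u)$ has absolute value at most $1/4^{\omega(n)}$, since the weight factorizes as $4^{-\omega(\Pi \boldsymbol b)} \cdot 2^{-\widetilde \omega(\Pi \boldsymbol z)} = 4^{-\omega(\Pi \boldsymbol b) - \omega(\Pi \boldsymbol z)} = 4^{-\omega(n)}$, and the two symbols $[\cdot/\cdot]$ are bounded by $1$ in modulus. To count decompositions of $n$ into the ten variables of \eqref{variables}, I observe that each prime $p \equiv 3 \bmod 4$ dividing $n$ must lie in exactly one of the four $b_k$ ($4$ choices), while each prime $p = \pi\bar\pi$ with $p \equiv 1 \bmod 4$ either lies entirely in some $b_k$ ($4$ choices) or has its primary factor $\pi$ placed in one of the $12$ ordered slots $z_{k\ell}$ with $\bar\pi$ simultaneously placed in $z_{\ell k}$ ($12$ choices), giving $4+12=16$ choices total. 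Dropping the remaining constraints \eqref{not=1ter} and \eqref{newcongcondition} yields the upper bound
\[
|c(n, \boldsymbol u)| \leq \frac{4^{\omega_3(n)} \cdot 16^{\omega_1(n)}}{4^{\omega(n)}} = 4^{\omega_1(n)},
\]
where the simplification uses $\omega(n) = \omega_1(n) + \omega_3(n)$.

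It then remains to estimate $\sum_{n \leq x} \mu^2(2n) \cdot 16^{\omega_1(n)}$. Splitting an odd squarefree $n$ uniquely as $n = m_1 m_3$ with $m_1$ supported on primes $\equiv 1 \bmod 4$ and $m_3$ on primes $\equiv 3 \bmod 4$, the inner sum over $m_1 \leq x/m_3$ is $O((x/m_3)\, \LL^{7})$ by Lemma \ref{easy1} with $\kappa = 16$, $\ell = 1$. Summing over $m_3$ using $\sum_{m_3 \leq x,\, p \mid m_3 \Rightarrow p \equiv 3 \bmod 4} \mu^2(m_3)/m_3 \ll \LL^{1/2}$ (Lemma \ref{easy1} with $\kappa = 1$, $\ell = 3$) gives $\sum_{n \leq x} \mu^2(2n) \cdot 16^{\omega_1(n)} \ll x \LL^{15/2}$. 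Combining with the Cauchy--Schwarz step produces the claimed bound $(x|\mathcal R|)^{1/2} \LL^{15/4}$. There is no real obstacle here; the technical point worth emphasizing is the telescoping $4^{\omega_3(n)} \cdot 16^{\omega_1(n)} = 4^{\omega(n)} \cdot 4^{\omega_1(n)}$, so that after cancelling the weight $4^{-\omega(n)}$ only the factor $4^{\omega_1(n)}$ remains, which is what makes the resulting second moment small enough for the exponent $15/4$ to come out.
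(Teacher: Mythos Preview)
Your proof is correct and follows essentially the same route as the paper: you bound the coefficient attached to each $n$ by the multiplicative function $g(n)=4^{\omega_1(n)}$ (via the same decomposition count $4+12=16$ for primes $p\equiv 1\bmod 4$ and $4$ for primes $p\equiv 3\bmod 4$, divided by the weight $4^{-\omega(n)}$), apply Cauchy--Schwarz against the indicator of $\mathcal R$, and then evaluate $\sum_{n\le x}g(n)^2=\sum_{n\le x}16^{\omega_1(n)}\ll x\,\LL^{15/2}$ by splitting $n=m_1m_3$ and using Lemma~\ref{easy1}, which is exactly the convolution $g^2=h_1\star h_3$ used in the paper.
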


\begin{proof} 
Let $g$ be the multiplicative function defined on the set of odd squarefree integers by the formula
$$
g(p) = 
\begin{cases}
4 &\text{ if } p\equiv 1 \bmod 4,
\\
1 &\text{ if } p\equiv 3 \bmod 4.
\end{cases}
$$
By a computation similar to \eqref{crude1} and by the Cauchy--Schwarz inequality, we have the inequality
\begin{equation}\label{CAUCHY}
\bigl\vert S_{\mathcal R} (x, \boldsymbol u)\bigr\vert
\leq \sum_{r \in \mathcal R} g(r)\leq \bigl\vert \mathcal R \bigr\vert^{1/2} \Bigl( \sum_{n\leq x} g^2(n)\Bigr)^{1/2}.
\end{equation}
Let $h_1$ and $h_3$ be the two multiplicative functions defined on the set of squarefree integers by the formulas:
$$
h_1 (p)= \begin{cases}
16 &\text {if } p\equiv 1 \bmod 4,\\
0 &\text{if } p \equiv 3 \bmod 4,
\end{cases}
\text{ and }
h_3(p) =
\begin{cases}
0 & \text{ if } p\equiv 1 \bmod 4,\\
1 &\text{ if } p \equiv 3 \bmod 4, 
\end{cases}
$$
We have the convolution equality $g^2 = h_1 \star h_3$. It remains to apply Lemma \ref{easy1} twice to obtain
$$
\sum_{n\leq x} g^2 (n)\ll x \LL^{15/2}.
$$ 
By \eqref{CAUCHY} we complete the proof of Lemma \ref{15/4}.
\end{proof}

\section{Preparation of the error term. Part II} 
\subsection{Dissection of the domain of summation}
\label{dissectionsum}
We continue to prepare the error term $S (x, \boldsymbol u)$ by controlling the sizes of the ten variables appearing in \eqref{variables} and by removing the multiplicative constraint \eqref{1<BZ<x}. When this will be achieved, we will be in good position to apply Lemmas \ref{convenientSiegel}, \ref{billegendre} and \ref{boundsforbili2}. Let $\Delta$ be the the dissection parameter
$$
\Delta:= (1 + \LL^{-10}),
$$
We denote by $B_k$ and $Z_{k\ell}$ ($0 \leq k\not= \ell \leq 3$) any number taken in 
the set of powers of $\Delta$
$$
\{ 1, \Delta, \, \Delta^2, \Delta^3, \, \dots\}
$$
and we impose $Z_{k\ell} =Z_{\ell k}$, for $k \not= \ell$ as a consequence of \eqref{conjugate}. We define 
$$
\boldsymbol B := (B_0,\dots, B_3), \boldsymbol Z := (Z_{k\ell}), \Pi \boldsymbol B := B_0 B_1 B_2 B_3, \Pi \boldsymbol Z := \vert Z_{01} Z_{02} Z_{03} Z_{12} Z_{13} Z_{23}\vert^2.
$$ 
The notation $b_k\simeq B_k$ (resp. $z_{k\ell} \simeq Z_{k\ell} $) means that the integer variable of summation $b_k$ (resp. the primitive primary Gaussian integer $z_{k\ell}$) satisfies the inequalities $B_k \leq b_k < \Delta B_k$ (resp. $Z_{k\ell} \leq \vert z_{k\ell} \vert < \Delta Z_{k\ell}$). More generally the notation $\boldsymbol b \simeq \boldsymbol B$ means that, for each $0\leq k \leq 3$, we have $b_k \simeq B_k$. Then the notation $\boldsymbol z \simeq \boldsymbol Z$ has an obvious meaning. For $(\boldsymbol B, \boldsymbol Z)$ as above, we consider the \emph{cuboid}
\begin{equation}
\label{cuboid}
\mathcal C (\boldsymbol B, \boldsymbol Z):=
\prod_{0\leq k \leq 3}\Bigl[B_k, \Delta B_k\Bigr] \times \prod_{0\leq k \not= \ell\leq 3} \Bigl[Z_{k\ell}, \Delta Z_{k\ell} \Bigr].
\end{equation}

We return to the equality \eqref{S(x)=5}. We cover the set of summation defined by \eqref{1<BZ<x} by 
\begin{equation}
\label{O(LL110)}
O (\LL^{110})
\end{equation}
disjoint cuboids of the form $\mathcal C (\boldsymbol B, \boldsymbol Z)$.
 
If $\mathcal C (\boldsymbol B, \boldsymbol Z)$ is such that
$$
( \Pi \boldsymbol B )( \Pi \boldsymbol Z) \Delta ^{16} \leq x,
$$
then every element $(\boldsymbol b, \boldsymbol z)$ of $\mathcal C (\boldsymbol B, \boldsymbol Z)$ satisfies \eqref{1<BZ<x}.
 
In counterpart, if 
$$
( \Pi \boldsymbol B )( \Pi \boldsymbol Z)\leq x \text{ and } ( \Pi \boldsymbol B )( \Pi \boldsymbol Z)\Delta^{16} >x
$$
the elements $(\boldsymbol b, \boldsymbol z)$ of $\mathcal C (\boldsymbol B, \boldsymbol Z)$ do not necessarily satisfy \eqref{1<BZ<x}. However the contribution of these elements to $S (x, \boldsymbol u)$ is negligible. It suffices to apply Lemma \ref{15/4} with 
$$
\mathcal R =\bigl[x(1-O (\LL^{-10}), x \bigr],
$$
to see that contribution is in $ \ll (x^2\LL^{-10}) ^{1/2} \LL^{15/4} \ll x \LL^{-1/8},$ which is acceptable in view of \eqref{SErr=} that we want to prove.
 
Similarly the contribution to $S^{\rm Err} (x)$ of the union of the $ \mathcal C (\boldsymbol B, \boldsymbol Z)$ such that
$$
( \Pi \boldsymbol B )( \Pi \boldsymbol Z) \leq x \LL^{-10},
$$ 
is also negligible. To prove that, we apply Lemma \ref{15/4}, with $\mathcal R =[1, x \LL^{-10}]$ to see that this contribution is $\ll x \LL^{-1/8}$.

\subsection{The case of the cuboids with too many small edges.} 
Our purpose is to restrict our study to the cuboids $ \mathcal C (\boldsymbol B, \boldsymbol Z)$ which have at least four {\it large} edges. So we introduce the following

\begin{definition}
Let $\mathcal C (\boldsymbol B, \boldsymbol Z)$ be the cuboid defined in \eqref{cuboid}. Let $y$ be one of the ten independent variables of the list \eqref{variables} and let $[Y, \Delta Y]$ be the edge associated to this variable $y$. We say that this edge is large if 
\begin{enumerate}[label=(\roman*)]
\item 
$Y\geq \exp(\LL^{1/100})$ when $y$ is one of the $b_k$ ($0\leq k \leq 3$),
\item 
$ Y \geq \LL^{5000}$ when $y$ is one of the $z_{k\ell}$ ($0\leq k < \ell\leq 3$).
\end{enumerate}
If $Y$ does not satisfy these inequalities, we say that this edge is small.

Similarly, we say that the associated variable $y$ is large or small according to the inequality satisfied by $Y$.
\end{definition}

Let $S_{\geq 7} (x, \boldsymbol u)$ be the total contribution to $S (x, \boldsymbol u)$ of all the $ \mathcal C (\boldsymbol B, \boldsymbol Z)$ which have at least seven small edges associated to seven of the ten independent variables of the list \eqref{variables}. We prove

\begin{lemma}
\label{>7negligible}
For $x\geq 1$, we have
$$
S_{\geq 7} (x, \boldsymbol u) \ll x \LL^{-1/8}.
$$
\end{lemma}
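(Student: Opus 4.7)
The plan is to bound $S_{\geq 7}(x, \boldsymbol u)$ by exploiting the structural constraint of having at least seven small edges, combining trivial bounds on the small variables with the Cauchy--Schwarz estimate of Lemma~\ref{15/4}.

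The first observation is structural. Denote by $s_b$ and $s_z$ the numbers of small $b$- and $z$-edges. Since $s_b \leq 4$ and $s_b+s_z \geq 7$, one necessarily has $s_z \geq 3$, so at least three of the six $z$-edges are small. This forces the ``small product''
$D := \prod_{k\,:\, B_k \text{ small}} B_k \cdot \prod_{(k,\ell)\,:\, Z_{k\ell} \text{ small}} Z_{k\ell}^2$
to satisfy $D \leq D_0 := \exp(4\LL^{1/100})\,\LL^{60000}$, i.e., to be subpolynomial in $x$. Conversely, with at most three large edges carrying the remaining mass of $(\Pi \boldsymbol B)(\Pi \boldsymbol Z) \in [x\LL^{-10}, x]$, the large-edge structure is strongly constrained.

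The main approach is to partition the cuboids by the configuration of small/large edges ($O(1)$ configurations) and, within each configuration, by the specific values of the small variables. For a fixed small specialisation contributing the factor $D$ to $n = (\Pi \boldsymbol b)(\Pi \boldsymbol z)$, the remaining subsum over the large variables fits into the framework of Lemma~\ref{15/4} with $\mathcal R(D) \subseteq \{n \leq x:\, D \mid n\}$ satisfying $|\mathcal R(D)| \leq x/D$, giving
\[
S_{\mathcal R(D)}(x, \boldsymbol u') \ll (x \cdot x/D)^{1/2}\, \LL^{15/4} = x\, \LL^{15/4}/\sqrt{D}.
\]
Summing over all small specialisations, using Lemma~\ref{easy1} (with $\kappa = 1/4$ for $b$-edges and $\kappa = 1/2$ for $z$-edges) to control the weighted density of admissible $D$, together with the dissection savings $(\Delta-1)^{10-s} = \LL^{-10(10-s)}$ from the at most three large edges, should absorb the $\LL^{15/4}$ loss and yield the target $\LL^{-1/8}$ bound.

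The hard part is the precise accounting of $\LL$-factors to reach $\LL^{-1/8}$. The constraint $s_z \geq 3$ is exactly what provides enough small-$z$ density saving (a factor of at least $\LL^{-3/2}$ via the Gaussian analogue of Lemma~\ref{easy1}) and enough dissection saving (at least $\LL^{-30}$ from the three or fewer large edges) to balance the $\LL^{15/4}$ loss of Lemma~\ref{15/4}. The authors' subsequent remark that one can in fact improve the error in \eqref{importantsum} to $x\LL^{-\theta}$ for any $\theta < 1/4$ by a sharper argument suggests that this bookkeeping is already sufficient but not tight.
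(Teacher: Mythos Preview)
Your route diverges from the paper's and, as written, has two genuine gaps.

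First, Lemma~\ref{15/4} is not applicable in the way you invoke it. That lemma bounds $S_{\mathcal R}(x,\boldsymbol u)$, the subsum of the \emph{full} ten--variable sum $S(x,\boldsymbol u)$ over those $(\boldsymbol b,\boldsymbol z)$ with $(\Pi\boldsymbol b)(\Pi\boldsymbol z)\in\mathcal R$. Freezing the small variables to specific values and summing only over the (at most three) large variables is a different object: it is not $S_{\mathcal R(D)}(x,\boldsymbol u')$ for any $\mathcal R(D)$ or $\boldsymbol u'$, since $\boldsymbol u$ only records congruences modulo $4$ and cannot encode fixed values of seven coordinates. You could try to rerun the Cauchy--Schwarz of Lemma~\ref{15/4} on the reduced sum, but then the outer sum over small specialisations reintroduces essentially all the mass the Cauchy--Schwarz step was supposed to kill, and you would need $\sum_{\text{small}} (\text{weights})/\sqrt{D}\ll \LL^{-15/4-1/8}$, which your sketch does not establish.

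Second, there are no ``dissection savings'' of size $(\Delta-1)^{10-s}$. The quantity $S_{\geq 7}(x,\boldsymbol u)$ is the \emph{total} contribution over all cuboids with at least seven small edges, not the contribution of one cuboid. Summing over cuboids exactly undoes any $(\Delta-1)$ factor coming from the width of an individual large edge; the dissection is a localisation device for the later oscillation arguments and yields no gain in this trivial regime.

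The paper proceeds more directly: it bounds the characters in \eqref{S(x)=5} by $1$ and applies Lemma~\ref{easy1} iteratively. In the representative configuration where $b_0,b_1$ and $z_{01},z_{02},z_{03},z_{12},z_{13}$ are small (so $b_2,b_3,z_{23}$ are the at most three large variables), one sets $b=b_0b_1$, $m=|z_{01}\cdots z_{13}|^2$, $b'=b_2b_3$, $m'=|z_{23}|^2$ and obtains
\[
\Sigma(x)\ \ll\ \sum_{b\leq e^{2\LL^{1/100}}}\frac{1}{2^{\omega(b)}}\sum_{\substack{m\leq \LL^{50000}\\ p\mid m\Rightarrow p\equiv 1\,(4)}}\Bigl(\tfrac52\Bigr)^{\omega(m)}\sum_{b'\leq x/(bm)}\frac{1}{2^{\omega(b')}}\sum_{\substack{m'\leq x/(bb'm)\\ p\mid m'\Rightarrow p\equiv 1\,(4)}}\frac{1}{2^{\omega(m')}},
\]
which Lemma~\ref{easy1} bounds by $x\LL^{-1/4+1/200+\varepsilon}\ll x\LL^{-1/8}$. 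The other small/large configurations are handled identically. No Cauchy--Schwarz and no appeal to the cuboid widths is needed here.
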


\begin{proof} 
The definition of {\it small} depends on the variable considered and since the variables $b_k$ and $z_{k\ell}$ do not have the same role, we are obliged to consider different cases according to the respective number of $b_k$ and $z_{k\ell}$ which are large. However we only present the case where at most two $b_k$ (say $b_2$ and $b_3$) and at most one $z_{k\ell}$ (say $z_{23}$) is large. The other cases are similar. Returning to \eqref{S(x)=5}, we see that the total contribution (denoted by $\Sigma (x)$) to $S (x, \boldsymbol u)$ of the $ \mathcal C (\boldsymbol B, \boldsymbol Z)$ corresponding to this particular case satisfies the inequality 
\begin{multline*}
\vert \Sigma (x)\vert \leq \underset{b_0, b_1 \leq \exp (\LL^{1/100})} {\sum \sum  } \frac 1 {4^{\omega (b_0b_1b_2b_3 )}} \sum_{\vert z_{01}\vert ,\vert z_{02}\vert, \vert z_{03}\vert, \atop \vert z_{12}\vert ,\vert z_{13}\vert \leq \LL^{5000}} \frac 1{4^{\omega ( \vert z_{01}\cdots z_{13}\vert^2)}} \\
\underset{ b_2b_3\leq x/(b_0b_1 \vert z_{01}\vert^2\cdots)}{\sum \sum} \frac 1 {4^{\omega (b_2b_3)}} \sum_{\vert z_{23}\vert^2\leq x/(b_0b_1\cdots \vert z_{01}\vert^2 \cdots )} \frac 1{4^{\omega (\vert z_{23}\vert^2 )}},
\end{multline*}
where all the prime factors of the integers $\vert z_{k\ell}\vert^2$ are congruent to $1 $ modulo $4$. By the change of variables $b:=b_0b_1$, $m := \vert z_{01}\vert^2 \vert z_{02}\vert^2 \vert z_{03}\vert^2 \vert z_{12}\vert^2 \vert z_{13}\vert^2 $, $b'=b_2b_3$ and $m':=\vert z_{23}\vert^2$, we obtain the bound
\begin{multline*}
\vert \Sigma (x) \vert \\ \leq \sum_{ b \leq \exp(2 \LL^{1/100})} \frac 1{2^{\omega (b)}} 
\sum_{m\leq \LL^{50000} \atop p\mid m \Rightarrow p \equiv 1 \bmod 4} \Bigl( \frac 52\Bigr)^{\omega (m)} \sum_{b' \leq x/(bm) } \frac 1 {2^{\omega (b')}} \sum_{m' \leq x/(bb'm)\atop p\mid m' \Rightarrow p\equiv 1 \bmod 4} \frac 1{2^{\omega (m')}},
\end{multline*}
which is finally
$$
\Sigma (x) \ll x \LL^{-1/4+ 1/200+ \varepsilon},
$$
by a repeated application of Lemma \ref{easy1} and where $\varepsilon >0$ is arbitrary. This finally gives
$$
\Sigma (x) \ll x \LL^{-1/8}
$$
as desired.
\end{proof}

\subsection{The crucial sums} 
Let $(\boldsymbol B, \boldsymbol Z)$ be as in \S \ref{dissectionsum} and let 
$S(\boldsymbol B, \boldsymbol Z, \boldsymbol u)$ be the subsum of $S(x, \boldsymbol u)$ (see \eqref{S(x)=5})
defined by 
\begin{equation}
\label{S(x)=6} 
S(\boldsymbol B, \boldsymbol Z, \boldsymbol u) = \sum_{\boldsymbol b} \frac 1{4^{\omega (\Pi \boldsymbol b)}} \sum_{\boldsymbol z}
\frac {\mu^2 \bigl(2( \Pi \boldsymbol b) \,(\Pi \boldsymbol z)\bigr)} {2^{\widetilde \omega (\Pi \boldsymbol z)}} \Bigl[ \frac{ b_0 z_{01} z_{02}z_{03}}{ b_1z_{10} z_{12}z_{13}}\Bigr] \cdot \Bigl[ \frac{ b_2z_{20}z_{21} z_{23}}{b_3 z_{30} z_{31} z_{32}} \Bigr],
\end{equation}
where $\boldsymbol b = (b_k)_{0\leq k \leq 3}$ and $\boldsymbol z = (z_{k\ell})_{0\leq k\not= \ell \leq 3}$ satisfy \eqref{conjugate},
\eqref{not=1ter}, \eqref{newcongcondition} and 
\begin{equation*}
\boldsymbol b\simeq \boldsymbol B \text{ and } \boldsymbol z\simeq \boldsymbol Z.
\end{equation*}
Recall that the $b_k$ are positive integers and that the $z_{k\ell}$ are primitive primary Gaussian integers.
 
By the discussion developed in \S \ref{dissectionsum} we can suppose that $(\boldsymbol B, \boldsymbol Z)$ satisfies the inequalities
\begin{equation}
\label{sandwich}
x\LL^{-10} <( \Pi \boldsymbol B)(\Pi \boldsymbol Z)\leq x \Delta^{-16}
\end{equation}
By Lemma \ref{>7negligible}, we can restrict our study to the cuboids $\mathcal C (\boldsymbol B, \boldsymbol Z)$ with
\begin{equation}
\label{atleast4}
\text{ at least four large variables among the ten of the list \eqref{variables}. }
\end{equation} 
 
Finally, since the number of subsums $S(\boldsymbol B, \boldsymbol Z, \boldsymbol u)$ is bounded by \eqref{O(LL110)}, to prove the inequality \eqref{SErr=} it is sufficient to prove that for every $(\boldsymbol B, \boldsymbol Z)$ satisfying \eqref{sandwich} and \eqref{atleast4} and for every $\boldsymbol u \in \mathcal U$ one has the inequality
\begin{equation}
\label{bingo}
S(\boldsymbol B, \boldsymbol Z, \boldsymbol u) \ll x \LL^{-1/8-110}.
\end{equation}
 
\section{Proof of Theorem \ref{central}} 
The purpose of this section is to prove \eqref{bingo} by exploiting the oscillation of the character $[\frac \cdot \cdot]$ in different ways.
 
\subsection{Gymnastics on the product of two characters.}
\label{Gymnastics} 
Recall that $z_{k\ell}= \overline {z_{\ell k}}$. The ten independent variables given in the list \eqref{variables} appear in the formula \eqref{S(x)=6}. Let 
\begin{equation}
\label{defF(BZ)} 
F (\boldsymbol b, \boldsymbol z) := \Bigl[ \frac{b_0 z_{01} z_{02}z_{03}}{ b_1z_{10} z_{12}z_{13}}\Bigr] \cdot \Bigl[ \frac{b_2z_{20}z_{21} z_{23}}{b_3 z_{30} z_{31} z_{32}} \Bigr].
\end{equation}
First of all we want to factorize $F$ in a suitable way to apply bounds coming from Lemmas \ref{convenientSiegel}, \ref{billegendre} and \ref{boundsforbili2}. We will exploit the multiplicativity of the characters \eqref{multiplicativity}, and from the fact that all the elements $b_k$ and $z_{k\ell}$ have an odd real part to apply \eqref{reciprocity}. Finally, we will use the conjugation formula \eqref{conjugation}. To shorten formulas, we introduce the following notation: let $x$ be one of ten variables listed in \eqref{variables}, we denote by $ f(\widehat x)$ any function of the ten variables of \eqref{variables} but independent of $x$.

\begin{lemma} 
\label{cancell} 
Let $(x,y)$ be a pair of distinct variables taken in \eqref{variables} such that $(x,y)$ or $(y, x)$ belongs to the set $\mathcal E$ of twenty six pairs of variables defined by 
$$
\begin{matrix}
\mathcal E := &\{\ (b_0, b_1), & (b_0, z_{01}),& (b_0, z_{12}), &(b_0, z_{13}), 
 &(b_1, z_{01}),& (b_1, z_{02}),\\ 
 & (b_1, z_{03}), 
 &(b_2, b_3), & (b_2, z_{03}),& (b_2, z_{13}),& (b_2, z_{23}), 
& (b_3, z_{02}), \\ & (b_3,z_{12}), & (b_3,z_{23}), 
&(z_{01}, z_{02}),& (z_{01}, z_{03}), & (z_{01}, z_{12}), &(z_{01}, z_{13}), \\
& (z_{02},z_{03}), & (z_{02}, z_{12}), & (z_{02}, z_{23}), 
 & (z_{03}, z_{13}),& (z_{03},z_{23}),
&(z_{12}, z_{13}),\\
 & ( z_{12}, z_{23}), 
&(z_{13},z_{23})\ \}.
\end{matrix}
$$
Then, at least one of two following facts happen
\begin{enumerate}[label=(\roman*)]
\item there exist functions $\xi$ and $\zeta$ with modulus less than $1$ such that, for all the values of the variables $( \boldsymbol b, \boldsymbol z) $ we have
$$
F(\boldsymbol b, \boldsymbol z) = \xi (\widehat x) \zeta (\widehat y)\, \Bigl[ \frac xy\Bigr].
$$
\item there exist functions $\xi$ and $\zeta$ with modulus less than $1$ such that, for all the values of the variables $( \boldsymbol b, \boldsymbol z) $ we have
$$
F(\boldsymbol b, \boldsymbol z) = \xi (\widehat x) \zeta (\widehat y)\, \Bigl[ \frac x{\overline y}\Bigr].
$$
\end{enumerate}
\end{lemma}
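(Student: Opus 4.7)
The plan is to expand $F(\boldsymbol b,\boldsymbol z)$ as a product of elementary characters by repeated application of the multiplicativity formula \eqref{multiplicativity}. Writing out the two symbols in \eqref{defF(BZ)}, this gives a product of $32$ factors of the form $[a/c]$, where $a$ runs over the numerator factors of the two big symbols (namely $b_0,z_{01},z_{02},z_{03}$ from the first and $b_2,z_{20},z_{21},z_{23}$ from the second) and $c$ runs over the corresponding denominator factors. Since $b_k$ is a positive integer (so $\overline{b_k}=b_k$) and each $z_{k\ell}$ is primary, all arguments have odd real part, and reciprocity \eqref{reciprocity} together with conjugation \eqref{conjugation} are available throughout.

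Each of these $32$ elementary factors depends on at most two of the ten independent variables of \eqref{variables}, where we identify $z_{k\ell}$ with $z_{\ell k}=\overline{z_{k\ell}}$ as the same variable. The key claim driving the proof is: for each unordered pair in $\mathcal E$, \emph{exactly one} of the $32$ elementary factors depends on both variables in the pair. Granting this, that unique factor can be rewritten as $[x/y]$ or $[x/\overline y]$ by at most one application each of reciprocity and conjugation, the choice determined by whether the second variable appears directly or via its complex conjugate. The remaining $31$ factors each depend on at most one of $\{x,y\}$, so those not depending on $x$ (together with any depending on neither) may be collected into $\xi(\widehat x)$, and those depending on $x$ but not on $y$ may be collected into $\zeta(\widehat y)$. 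Since each elementary factor has modulus at most $1$, so do $\xi$ and $\zeta$.

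Verifying the key claim reduces to an explicit check over the $26$ pairs of $\mathcal E$, which split naturally into three types. For pairs $(b_i,b_j)$ in $\mathcal E$ (namely $(b_0,b_1)$ and $(b_2,b_3)$) and for pairs $(b_k,z_{ij})$ in $\mathcal E$, the unique cross factor arises inside whichever single big symbol contains both variables; in these cases (i) and (ii) coincide because $\overline{b_k}=b_k$. For pairs $(z_{ij},z_{kl})$ in $\mathcal E$ one argues analogously, but the cross factor may come from the second big symbol in disguise (since $z_{\ell k}=\overline{z_{k\ell}}$), and one genuine application of conjugation is needed to rewrite it in the required form; here the two cases (i) and (ii) are generally distinct. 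The combinatorial reason $\mathcal E$ has exactly $26$ elements is precisely that the excluded pairs $(b_i,b_j)$, $(b_k,z_{ij})$ place both variables on the same side of a single big symbol (no cross factor), while the excluded $z$-pairs $(z_{01},z_{23})$, $(z_{02},z_{13})$, $(z_{03},z_{12})$ either produce no cross factor or produce two equal cross factors that multiply to $1$.

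The main obstacle is simply the tedium of the pair-by-pair verification. No individual case is difficult, but there are several bookkeeping traps: distinguishing a variable from its complex conjugate, ensuring the cross factor is truly unique (rather than arising twice with a hidden cancellation, as in the excluded pairs), and correctly tracking the position --- numerator versus denominator, Symbol~1 versus Symbol~2 --- that determines whether the final normalized form lands on $[x/y]$ or $[x/\overline y]$. Once performed uniformly for each of the three types above, the lemma follows.
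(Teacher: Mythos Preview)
Your proposal is correct and follows essentially the same approach as the paper: expand $F(\boldsymbol b,\boldsymbol z)$ via multiplicativity \eqref{multiplicativity}, isolate the single elementary factor depending on both chosen variables, and absorb the remaining factors into $\xi(\widehat x)$ and $\zeta(\widehat y)$. The paper carries this out only for the representative pair $(z_{01},z_{12})$ and declares the rest analogous, whereas you give a more systematic account (including the uniqueness-of-the-cross-factor observation and the explanation of why the three excluded $z$-pairs fail); the underlying argument is the same.
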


\begin{proof} We only give the proof when $(x,y) =(z_{01}, z_{12})$. With obvious meanings for $\alpha$, $\beta$, $\gamma$ and $\delta$, write
\begin{equation}
\label{decomp20}
\Bigl[ \frac{b_0 z_{01} z_{02}z_{03}}{ b_1z_{10} z_{12}z_{13}}\Bigr] \cdot \Bigl[ \frac{b_2z_{20}z_{21} z_{23}}{b_3 z_{30} z_{31} z_{32}} \Bigr] 
= \Bigl[ \frac{\alpha z_{01}}{\beta \overline {z_{01} } z_{12}}\Bigr]\cdot
\Bigl[\frac {\gamma \overline{z_{12}}} {\delta}\Bigr],
\end{equation}
and decompose the first character as
$$
\Bigl[ \frac{\alpha z_{01}}{\beta \overline {z_{01} } z_{12}}\Bigr] = \Bigl[ \frac \alpha {\beta \overline{z_{01}}}\Bigr]\cdot \Bigl[ \frac \alpha {z_{12}}\Bigr] \cdot \Bigl[ \frac {z_{01}}{\beta \overline{z_{01}}}\Bigr] \cdot 
\Bigl[ \frac{z_{01}}{z_{12}}\Bigr].
$$
Combining with \eqref{decomp20}, the definitions of $\xi(\widehat {z_{01}})$ and $\zeta(\widehat {z_{12}})$ are obvious.
\end{proof}

\begin{remark}
\label{remark4}
Actually in the application below, we will never use the pairs $(b_k,b_\ell)$ of the set $\mathcal E$ since there is no the oscillation of the symbol $[\frac {b_k}{b_\ell}]$ because its value is $1$ always (see \eqref{=1always}).

Finally a pair $(z_{k\ell}, z_{k'\ell'})$ (with $k< \ell$, $k'< \ell'$ and $(k,\ell)\not=(k',\ell')$) belongs to the set $\mathcal E$ of Lemma \ref{cancell} if and only if the intersection of the set of indices $\{k, \ell\} \cap \{ k', \ell'\}$ contains exactly one element. This property implies that, in any set of three distinct variables $z_{k_i\ell_i}$ (with $1 \leq i\leq 3$ and $0\leq k_i < \ell_i\leq 3$), there exists at least two indices $i$ and $j$ such that $(z_{k_i\ell_i}, z_{k_j \ell_j})$ belongs to
$\mathcal E$.
\end{remark}

\subsection{The final steps}
Our proof of \eqref{bingo} is based on the number of large edges (at least four) of the cuboid $\mathcal C (\boldsymbol B, \boldsymbol Z)$ and the distribution of this number between the $b_k$ and the $z_{k\ell}$. Recall that the $z_{k\ell}$ are primary, primitive, squarefree and coprime by pairs. Our discussion is divided in four cases which do not exclude each other.
 
\subsubsection{The variables $b_0$, $b_1$, $b_2$ and $b_3$ are large and no $z_{k\ell}$ is large.} 
By the condition \eqref{not=1ter}, there is a $z_{k\ell}\not=1$. By symmetry, we can suppose that we have $z_{01}\not= 1$. By the multiplicative properties of the symbol $\Bigl[\frac \cdot \cdot\Bigr]$ and by \eqref{legendre} we factorize $F (\boldsymbol b, \boldsymbol z)$ defined in \eqref{defF(BZ)} as 
$$
F(\boldsymbol b, \boldsymbol z)= f(\widehat {b_0}) \Bigl[ \frac {b_0}{z_{10}z_{12} z_{13}}\Bigr] = f(\widehat {b_0})\Big(\frac {b_0}{\vert z_{01} z_{12}z_{13}\vert^2} \Bigr),
$$
where $f ( \widehat {b_0})$ is a function independent of $b_0$ of modulus less than one. Since the variables $z_{01}(\not =1)$, $\overline{z_{01}}$, $z_{12}$, $\overline{z_{12}}$,  $z_{13}$ and $\overline{z_{13}}$ are small, primitive, primary and coprime in pairs, the denominator 
$ \vert z_{01} z_{12}z_{13}\vert^2$ is a non square odd integer, satisfying the inequalities
\begin{equation}
\label{zzz<LL}
1 < \vert z_{10} z_{12}z_{13}\vert^2 \leq \LL^{30000}.
\end{equation}

We deduce the following inequality 
\begin{multline*}
\vert S(\boldsymbol B, \boldsymbol Z,\boldsymbol u)\vert
\leq \sum_{b_1\simeq B_1} \frac 1{4^{\omega (b_1)}}\sum_{b_2 \simeq B_2} \frac 1{4^{\omega (b_2)}}
\sum_{b_3 \simeq B_3} \frac 1{4^{\omega (b_3)}} \sum_{\boldsymbol z\simeq \boldsymbol Z} \frac 1{4^{\omega (\Pi \boldsymbol z)}}
\\
\Bigl\vert \sum_{b_0 \simeq B_0}
\frac {\mu^2 (b_0)}{4^{\omega (b_0)}} \Big(\frac {b_0}{\vert z_{10} z_{12}z_{13}\vert^2} \Bigr)
\Bigr\vert,
\end{multline*}
where, furthermore, $b_0$ is coprime with $r:= 2b_1b_2b_3 \vert z_{02} z_{03}z_{23}\vert^2$. We apply Lemma \ref{convenientSiegel} to the inner sum on $b_0$, with a very large $A$. Then we sum trivially over the other variables: by \eqref{sandwich}, by \eqref{zzz<LL} and by the inequality $\log B_0 \geq \LL^{1/100}$, we obtain \eqref{bingo} in that case.

\subsubsection{Three variables $b_k$  are large and some $z_{k'\ell'}$ is large.}
\label{3largeb}  We can suppose that the variables $b_0$, $b_1$ and $b_2$ are large. 
It easy to check that for any choice $0\leq k' < \ell' \leq 3$, at least one of the pairs $(b_0, z_{k'\ell'})$, $(b_1, z_{k'\ell'})$ and $(b_2, z_{k'\ell'})$ appears in the set $\mathcal E$ given in Lemma \ref{cancell}. To facilitate the exposition suppose that we are in the case where this pair is $(b_0, z_{01})$. Thanks to this lemma, we have the inequality 
\begin{equation}\label{decomp2}
\vert S (\boldsymbol B, \boldsymbol Z)\vert \leq \underset{b_1, b_2, b_3}{\sum\sum\sum}\ 
\underset{z_{02}, z_{03}, z_{12}\atop z_{13}, z_{23}}{\sum\cdots \sum} \Bigl\vert \sum_{b_0\simeq B_0} \sum_{z_{01} \simeq Z_{01}}
\xi (\widehat {b_0}) \zeta (\widehat{ z_{01}})\Bigl[ \frac {b_0} {z_{01}}\Bigr]
\Bigr\vert 
\end{equation}
Inspired by the equality
$$
\Bigl[ \frac {b_0} {z_{01}}\Bigr] = \Bigl( \frac {b_0}{\vert z_{01}\vert^2}\Bigl),
$$
we put $m:= \vert z_{01}\vert^2$. The number of ways of representing $m$ in this form is $O(d (m))$. Hence the last double sum in \eqref{decomp2} is of the shape $\Omega (\boldsymbol \xi', \boldsymbol \zeta, \Delta B_0, \Delta^2 Z_{01}^2)$ with $\vert \xi' (m) \vert \leq d(m).$ We apply Lemma \ref{billegendreextended}, with the choice $K = \LL^{150}$. By hypothesis $B_0$ and $Z_{01}^2$ are large so are both greater than $\LL^{10000}$. This lemma gives a non trivial bound for the last double sum (in \eqref{decomp2}) by a factor $\LL^{-147}$. Summing trivially over the variables $z_{02}, z_{03}, z_{12}, z_{13}, z_{23}, b_1, b_2, b_3$ in \eqref{decomp2}, we complete the proof of \eqref{bingo} in this case too.

\subsubsection{Two $b_k$ are large and two $z_{k'\ell'}$ are large} 
By directly checking all the possibilities for the two variables $b_k$ and the two variables $z_{k'\ell'}$ we claim that there is a pair $(b_{k_0}, z_{k'_0\ell'_0})$ of these large variables in the set $\mathcal E$. As soon as this pair $(b_{k_0},z_{k'_0\ell'_0}) $ is found, the proof is similar to \S \ref{3largeb}.
\subsubsection{Three $z_{k\ell}$ are large}
\label{Onebklarge}
By Remark \ref{remark4} of \S \ref{Gymnastics}, there exists a pair of these large variables $(z_{k\ell}, z_{k'\ell'})$ in the set $\mathcal E$
of Lemma \ref{cancell}. For simplicity of notations, suppose that this pair is $(z_{01}, z_{02})$. This allows us to rearrange $S (\boldsymbol B, \boldsymbol Z, \boldsymbol u)$ as follows 
\begin{equation*}
\vert S (\boldsymbol B, \boldsymbol Z, \boldsymbol u)\vert \leq \underset{b_0, b_1, b_2, b_3}{\sum\cdots\sum}\ 
\underset{z_{03}, z_{12}, z_{13} ,z_{23}}{\sum\cdots \sum} \Bigl\vert \sum_{z_{01}\simeq Z_{01}} \sum_{z_{02} \simeq Z_{02}}
\xi (\widehat {z_{01}}) \zeta (\widehat{ z_{02}})\Bigl[ \frac {z_{01} }{z_{02}}\Bigr]
\Bigr\vert ,
\end{equation*}
for some coefficients $\boldsymbol \xi$ and $\boldsymbol \zeta$ less than one in modulus. The double inner sum over $z_{01}$ and $z_{02}$ is of the form $\Xi(\boldsymbol \xi, \boldsymbol \zeta,\Delta^2 Z_{01}^2,\Delta^2 Z_{02}^2)$ which is studied in Lemma \ref{boundsforbili2}. Since $Z_{01}$ and $Z_{02}$ are larger than $\LL^{5000}$, this lemma gives a non trivial bound for the last double sum by a factor $\LL^{-10000/9}$. It remains to sum trivially over the $b_k$ and the four remaining $z_{k\ell}$ to obtain the bound \eqref{bingo} in this last case.
 
 
The proof of \eqref{bingo} has been accomplished in all the configurations of cuboids satisfying \eqref{sandwich} and \eqref{atleast4}. The proof of Theorem \ref{central} is now complete.

\section{Proof of Theorem \ref{main}}
\label{sMain}
We split
\[
\#\{0 < n < x : n \textup{ odd and squarefree}, \ \rk_4 \textup{Cl}(K_n) \neq \omega_3(n) - 1\}
\]
in the set of generic $n$ and its complement. The cardinality of the latter set is $O\bigl(x\log^{-1/4} \bigr)$, so it remains to bound
\[
g(x) := \#\{0 < n < x : n \textup{ odd, squarefree and generic}, \ \rk_4 \textup{Cl}(K_n) \neq \omega_3(n) - 1\}.
\]
By Theorem \ref{central}, there exists an absolute $C_0$, such that, for all $x\geq 2$, one has the inequality
$$
\sum_{n\leq x} \mu^2 (2n) \Bigl( \frac {f(n)}{2^{\omega_3 (n)-1}}\Bigr) \leq \sum_{n\leq x} \mu^2 (2n) +C_0\, x \log^{-1/8}x.
$$
By positivity, we deduce 
$$
\sum_{n\leq x\atop n \text{ generic} } \mu^2 (2n) \Bigl( \frac {f(n)}{2^{\omega_3 (n)-1}}\Bigr) \leq \sum_{n\leq x} \mu^2 (2n) + C_0\, x \log^{-1/8}x.
$$
Therefore, upon taking the difference of the above two equations, we obtain
\begin{align}
\sum_{n\leq x\atop n \text{ generic} } \mu^2 (2n) \Bigl( \frac {f(n)}{2^{\omega_3 (n)-1}}-1\Bigr)  &\leq \sum_{n\leq x\atop n \text{ not generic}} \mu^2 (2n) + C_0\,x \log^{-1/8}x,\nonumber\\
&\leq 2C_0 \,x \log^{-1/8} x, \label{theend}
\end{align}
for sufficiently large $x$. We appeal to Theorem \ref{tAlgebra} to conclude that 
$$
\frac {f(n)}{2^{\omega_3 (n)-1}}-1 \geq 0
$$ 
and that it is equal to zero if and only if $\rk_4 \text{Cl}(K_n) = \omega_3 (n)-1$ and that is $\geq 1$ if $\rk_4 \text{Cl}(K_n) \geq  \omega_3 (n)$. These remarks imply that the left--hand side of the inequality \eqref{theend} is larger than $g(x)$. This completes the proof of Theorem \ref{main}.


\end{document}